\renewcommand{\H}{\mathcal{H}}
\newcommand{\I}{\mathcal{I}}
\newcommand{\Ht}{G}
\renewcommand{\L}{\bar L}
\newcommand{\N}{\mathbb N}
\newcommand{\C}{\mathbb C}
\DeclareMathOperator{\Dom}{Dom}
\newcommand{\p}{\partial}
\newcommand{\z}{\bar z}
\newcommand{\dbar}{\bar\partial}
\newcommand{\dbars}{\bar\partial^*}
\newcommand{\dbarb}{\bar\partial_b}
\newcommand{\dbarbs}{\bar\partial_b\hspace{-3.5pt}{}^*}
\newcommand{\vp}{\varphi}
\newcommand{\lam}{\lambda}
\newcommand{\ep}{\epsilon}
\newtheorem{thm}{Theorem}[section]
\newtheorem{prop}[thm]{Proposition}
\newtheorem{defn}[thm]{Definition}
\newtheorem{rem}[thm]{Remark}
\begin{document}
\begin{abstract}Let $\Omega\subset\C^n$ be a bounded smooth pseudoconvex domain. We show that compactness of the complex Green operator $G_{q}$ on
$(0,q)$-forms on $b\Omega$ implies compactness of the $\dbar$-Neumann operator $N_{q}$ on $\Omega$. We prove that if $1 \leq q \leq n-2$ and $b\Omega$ satisfies $(P_q)$ and $(P_{n-q-1})$, then $G_{q}$ is a compact operator (and so is $G_{n-1-q}$). Our method relies on a jump type formula to represent forms on the boundary, and we prove an auxiliary compactness result for an `annulus' between two pseudoconvex domains. Our results, combined with the known characterization of compactness in the $\overline{\partial}$-Neumann problem on locally convexifiable domains, yield the corresponding characterization of compactness of the complex Green operator(s) on these domains.
\end{abstract}

\title [Compactness of the complex green operator] 
{Compactness of the complex Green operator}
\author{Andrew S.~Raich \and Emil J.~Straube}

\address{
Department of Mathematics\\ Texas A\&M University\\ Mailstop 3368  \\ College Station, TX
77845-3368}

\thanks{Research supported in part by NSF grant DMS 0500842}
\subjclass[2000]{32W10, 32W05, 35N15}

\keywords{}
\email{araich@math.tamu.edu, straube@math.tamu.edu}

\maketitle

%
%
\section{Introduction and Results}\label{sec:results}
Let $\Omega\subset\C^n$ be a bounded, smooth pseudoconvex domain. The Cauchy-Riemann operator $\dbar$ is a closed,
densely defined operator mapping $L^2_{(0,q)}(\Omega)\to L^2_{(0,q+1)}(\Omega)$ and satisfying $\overline{\partial}^{2}=0$. The associated complex is the $\overline{\partial}$, or Dolbeault, complex.
Let $\dbars$ be the $L^2$-adjoint of $\dbar$, and $\Box = \dbars\dbar + \dbar\dbars$, the $\dbar$-Neumann Laplacian. When $\Omega$ is pseudoconvex, and $1\leq q \leq n$, $\Box$ acting on $\Dom(\Box) \subset L^2_{(0,q)}(\Omega)$ is invertible with a bounded inverse
$N_q$. This inverse is called the $\dbar$-Neumann operator. We refer the reader to \cite{FoKo72,BoSt99,ChSh01,FuSt01,Str06} for background on the $\overline{\partial}$-Neumann problem and its $L^{2}$-Sobolev theory. 

On $b\Omega$, $\dbar$ induces the tangential Cauchy-Riemann operator  $\overline{\partial}_{b}$. Kohn and Rossi introduced
the $\overline{\partial}_{b}$ complex in an effort to understand the holomorphic extension of $CR$-functions from the boundaries of complex manifolds \cite{KoRo65}. Let $\dbarbs$ be the $L^2$-adjoint of $\dbar_b$, and $\Box_b = \dbarb\dbarbs + \dbarbs\dbarb$, the Kohn Laplacian. When $0\leq q \leq n-1$, $\Box_b$ is invertible (on $(\ker\dbarb)^{\perp}$ when $q=0$, and on $(\ker\dbarbs)^{\perp}$ in the case $q=n-1$) with inverse $\Ht_q$. $G_{q}$ is the complex Green operator. In particular, $\overline{\partial}_{b}$, $\overline{\partial}_{b}^{*}$, and $\Box_{b}$ have closed range. Details may be found in \cite{Shaw85,BoSh86,Kohn86,ChSh01}. The regularity and mapping properties of $\dbarb$ are well understood when $\Omega$ is of finite type and satisfies the condition that all eigenvalues of the Levi form are comparable. In this case, optimal subelliptic estimates (so called maximal estimates) were shown in \cite{Koe02}. This work unifies earlier work for strictly pseudoconvex domains and for domains of finite type in $\mathbb{C}^{2}$. We refer the reader to \cite{Koe02} for references to this earlier work and further discussion. For general domains, it is known that subellipticity of $\Ht_q$ implies finite type \cite{Dia86, Koe04}. Global regularity, in the sense of preservation of Sobolev spaces, holds when $\Omega$ admits a defining function that is plurisubharmonic at points of the boundary (\cite{BoSt91}). A defining function is called plurisubharmonic at the boundary when its complex Hessian at points of the boundary is positive semidefinite in all directions. For example, all convex domains admit such defining functions.  

The question we address in this article is that of compactness of the complex Green operator, and, to some extent, its relationship to compactness of the $\overline{\partial}$-Neumann operator.  The results discussed above notwithstanding, the regularity results for $\overline{\partial}_{b}$ do not always parallel those for $\overline{\partial}$. The reason is that there is a symmetry in the form levels for $\overline{\partial}_{b}$ with respect to compactness and subellipticity that is absent for $\overline{\partial}$. This phenomenon was pointed out by Koenig (\cite{Koe04}, p.289). He associates to a $q$-form $u$ on $b\Omega$ an $(n-1-q)$-form $\tilde{u}$ (through a modified Hodge-$*$  construction) such that $\|u\| \approx \|\tilde{u}\|$,  $\overline{\partial}_{b}\tilde{u}=(-1)^{q}\widetilde{(\overline{\partial}^{*}_{b}u)}$, and $\overline{\partial}_{b}^{*}\tilde{u}=(-1)^{q+1}\widetilde{(\overline{\partial}_{b}u)}$, modulo terms that are $O(\|u\|)$. Consequently, a subelliptic estimate or a compactness estimate holds for $q$-forms if and only if the corresponding estimate holds for $(n-1-q)$-forms. In view of the characterization of subellipticity in terms of finite type \cite{Cat83, Cat87}, and of compactness of $N_{q}$ on convex domains by the absence of $q$-dimensional varieties from the boundary (\cite{FuSt98}), such a symmetry between form levels is manifestly absent in the $\overline{\partial}$-Neumann problem. (The analogous construction performed for forms on $\Omega$ yields a $\tilde{u}$ that in general is not in the domain of $\overline{\partial}^{*}$.) At one point, we will actually need a version of the tilde operators that intertwines $\overline{\partial}_{b}$ and $\overline{\partial}_{b}^{*}$ without $0$-th order error terms; we discuss such a construction in an appendix (section \ref{appendix}).

Our results are as follows. First, we prove the analogue for compactness of the fact that subellipticity of $G_{q}$ implies subellipticity of $N_{q}$ (\cite{Dia86, Koe04}). It is worthwhile to note that our method provides, in the case of boundaries of smooth pseudoconvex domains in $\mathbb{C}^{n}$, a new proof of this result as well, compare Remark \ref{alsoforsubelliptic} below.
\begin{thm} \label{thm:cpt bdy implies cpt inside}
Let $\Omega\subset\C^n$ be a bounded pseudoconvex domain with
smooth boundary. Let $1\leq q\leq n-2$. If the complex Green operator $G_{q}$ is a compact operator on $L^2_{(0,q)}(b\Omega)$,  
then the $\dbar$-Neumann operator $N_{q}$ is a compact operator on $L^2_{(0,q)}(\Omega)$.
\end{thm}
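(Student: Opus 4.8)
The plan is to transfer compactness from the boundary to the domain via a jump-type formula. First I would set up the relevant geometry: pick a bounded pseudoconvex domain $\widetilde\Omega$ with smooth boundary that strictly contains $\overline\Omega$ (one can simply take a large ball, or slightly enlarge $\Omega$), so that $A := \widetilde\Omega \setminus \overline\Omega$ is a smoothly bounded ``annulus'' between two pseudoconvex domains. The key analytic input I would want is the auxiliary compactness result advertised in the abstract, namely that the $\dbar$-Neumann operator on such an annulus $A$ is compact at form level $q$ (for $1 \leq q \leq n-2$, the Levi-form hypotheses being inherited trivially here since both boundary pieces are already pseudoconvex, with the inner boundary $b\Omega$ being pseudoconcave from the side of $A$). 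Granting that, the strategy is: given $\alpha \in L^2_{(0,q)}(\Omega)$ with $\dbar\alpha = 0$ (it suffices to prove compactness of $N_q$ by establishing a compactness estimate on $\dbar$-closed forms, or equivalently compactness of the Bergman-type projection and of the canonical solution operator to $\dbar$), I would represent the canonical solution to $\dbar u = \alpha$ on $\Omega$ by combining a solution coming from $\widetilde\Omega$ (which is compact by hypothesis on the annulus only if we glue correctly) with a boundary correction term built from $G_q$ acting on the boundary values.

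More precisely, the jump formula I have in mind runs as follows. Starting from a form on $\Omega$, extend it suitably and use $\dbar$ on $\widetilde\Omega$ and on $A$ to decompose it; the ``jump'' across $b\Omega$ of the resulting pieces is a form on $b\Omega$ to which one applies $\dbarb$, $\dbarbs$, and the complex Green operator $G_q$. Concretely, I expect an identity of the shape $u|_\Omega = (\text{compact operator on }\widetilde\Omega\text{ or }A\text{ applied to }\alpha) + (\text{trace to }b\Omega) \circ G_q \circ (\text{trace to }b\Omega)$, where the outer operators are the natural restriction/extension and $\dbarb$-type operators that are bounded $L^2(\Omega) \to L^2(b\Omega)$ on the relevant subspaces (after the usual smoothing needed to make traces of $L^2$ forms meaningful — this is where one works with $\dbar$-closed forms and uses interior elliptic regularity of $\Box$, or inserts harmonic extensions). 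Each summand is then compact: the first by the annulus compactness result (or the hypothesis applied on $\widetilde\Omega$ if $\widetilde\Omega$ is chosen to have compact $N_q$, e.g. a ball), and the second because it factors through the compact operator $G_q$ with bounded operators on either side. Hence $N_q$, or the canonical solution operator $\dbars N_q$, is compact, and a standard argument (compactness of the canonical solution operators at levels $q$ and $q+1$, together with the identity $N_q = (\dbars N_{q+1})^*(\dbars N_{q+1}) + (\dbars N_q)(\dbars N_q)^*$, or directly via the compactness estimate) upgrades this to compactness of $N_q$ itself.

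The main obstacle, and the place where the real work lies, is making the jump formula precise at the level of $L^2$ forms and controlling the trace operators: $L^2$ forms on $\Omega$ do not have boundary traces, so one must first regularize — e.g. replace $\alpha$ by its harmonic (or $\Box$-harmonic) part away from the boundary, or run the whole argument on $\dbar$-closed forms where interior ellipticity gives smoothness in $\overline\Omega$ up to the boundary only after solving $\dbar$ — and then show that the regularization errors are themselves given by compact operators. A related subtlety is that the complex Green operator $G_q$ is only invertible on the orthogonal complement of its (finite- or infinite-dimensional, depending on $q$) kernel, so the jump formula must be arranged so that the boundary form fed into $G_q$ lies in the correct subspace, or one must carry the harmonic projection along and check it contributes a compact (indeed finite-rank when the cohomology is finite-dimensional) term. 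I would handle the trace issue by the standard device of writing, for $\dbar$-closed $\alpha$, the canonical solution $u$ on $\Omega$ via Bochner–Martinelli–Koppelman-type kernels or via solving on $\widetilde\Omega$ first, so that all the forms appearing in the jump across $b\Omega$ are already as smooth as the data permits, and then invoking the annulus result to absorb the interior pieces. The symmetry between levels $q$ and $n-1-q$ noted in the introduction is not needed for this particular theorem (it will be used for the sufficiency direction), so I would not invoke the tilde operators here.
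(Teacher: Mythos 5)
Your plan has a genuine gap at its central point: the annulus compactness you want to invoke is not available under the hypotheses of Theorem \ref{thm:cpt bdy implies cpt inside}. The auxiliary result for $A=\widetilde\Omega\setminus\overline\Omega$ (Proposition \ref{cor:Nq is cpt, all weights}) requires the outer boundary to satisfy $(P_q)$ \emph{and the inner boundary $b\Omega$ to satisfy $(P_{n-1-q})$}; nothing of the sort follows from compactness of $G_q$ alone, and pseudoconvexity of $\Omega$ does not help --- the pseudoconcavity of $b\Omega$ seen from inside $A$ is exactly the source of the difficulty (the boundary integral in the Kohn--Morrey--H\"ormander identity has the wrong sign there), not a reason the hypotheses are ``inherited trivially.'' So your first summand is not known to be compact, and the argument is circular in spirit: the annulus proposition is the tool the paper uses for the \emph{converse-type} result, Theorem \ref{thm:Pq implies cptness on bdy}, where $(P_{n-1-q})$ is an explicit hypothesis. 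Two further obstructions: the jump formula naturally runs in the opposite direction from what you need --- it writes a $\dbarb$-closed form on $b\Omega$ as a difference of boundary values of $\dbar$-closed forms from inside and outside, thereby converting interior/annulus compactness into boundary compactness, not the reverse --- and the fallback of ``solving on $\widetilde\Omega$ first'' (e.g.\ a ball, where $N_q$ is indeed compact) fails because a $\dbar$-closed $\alpha\in L^2_{(0,q)}(\Omega)$ has no bounded $\dbar$-closed extension to $\widetilde\Omega$ in general.

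The paper's proof avoids all of this by working with estimates rather than solution formulas. Since $\Box$ acts coefficientwise as the Laplacian, $\|u\|_{W^2(\Omega)}$ is controlled by $\|\dbar u\|_{W^1(\Omega)}+\|\dbars u\|_{W^1(\Omega)}$ plus a boundary trace norm; the normal part of $u$ has vanishing trace and is handled by ellipticity alone, while for the tangential part one applies the compactness estimate equivalent to compactness of $G_q$, lifted to the Sobolev level $3/2$ on $b\Omega$, using that $\dbarb(u_{tan}|_{b\Omega})$ and $\dbarbs(u_{tan}|_{b\Omega})$ are traces of $\dbar u_{tan}$ and $\dbars u_{tan}$ modulo zero-order terms. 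Absorbing terms gives an a priori $W^2$ compactness estimate; elliptic regularization (the operators $N_{\delta,q}$, with estimates uniform in $\delta$) makes it legitimate and yields compactness of $N_q$ on $W^2_{(0,q)}(\Omega)$; finally Lax's theorem on symmetrizable operators transfers compactness from $W^2_{(0,q)}(\Omega)$ to $L^2_{(0,q)}(\Omega)$. If you want to pursue a jump-formula route for this implication, you would need a genuinely new ingredient to replace the unavailable annulus compactness; as written, the proposal does not close.
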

Theorem \ref{thm:cpt bdy implies cpt inside} is proved in Section \ref{sec:cpt bdy to int}. Our strategy is simple. Because $\Box_{q}=\overline{\partial}^{*}\overline{\partial}+\overline{\partial}\overline{\partial}^{*}$ acts componentwise as (a constant multiple of) the real Laplacian, the $L^{2}$-norm of a form $u \in \text{dom}(\overline{\partial}) \cap \text{dom}(\overline{\partial}^{*})$ is controlled by $\|\overline{\partial}u\|+\|\overline{\partial}^{*}u\|$ plus the $(-1/2)$-norm of the trace on the boundary. To the tangential part of this trace, one applies the compactness estimate from the assumption in the theorem, estimating the norms of $\overline{\partial}_{b} u_{tan}$ and of $\overline{\partial}_{b}^{*}u_{tan}$ via trace theorems (here $u_{tan}$ denotes the tangential part of $u$; taking the tangential part `loses' the normal component of the form, but this component is benign). In order to avoid various issues related to trace theorems, we actually work in $W^{2}_{(0,q)}(\Omega)$, rather than in $L^{2}_{(0,q)}(\Omega)$. Our arguments involve (as usual) absorbing terms, and since $N_{q}$ is not a priori known to preserve $W^{2}_{(0,q)}(\Omega)$, we use elliptic regularization to ensure finiteness of the terms to be absorbed. We thus obtain compactness of $N_{q}$ (`only') on  $W^{2}_{(0,q)}(\Omega)$. However, as pointed out in \cite{FuSt01}, because $N_{q}$ is self-adjoint in $L^{2}_{(0,q)}(\Omega)$, compactness in $W^{2}_{(0,q)}(\Omega)$ implies compactness in $L^{2}_{(0,q)}(\Omega)$, by a general principle from functional analysis. 
\begin{rem}\label{intNbdry}
\emph{In view of the symmetry for $\overline{\partial}_{b}$ and its absence for $\overline{\partial}$, discussed above, Theorem \ref{thm:cpt bdy implies cpt inside} implies in particular that compactness or subellipticity of $N_{q}$ need not imply the corresponding property for $G_{q}$ when $q>(n-1)/2$. Of course, the appropriate question becomes whether such an implication holds when compactness or subellipticity is assumed for the $\overline{\partial}$-Neumann operator at levels $q$ \emph{and} $(n-1-q)$. As far as the authors know, this is open for both compactness and subellipticity.}
\end{rem}

Next, we show that Catlin's classical sufficient condition for compactness in the $\overline{\partial}$-Neumann problem (\cite{Cat84}), imposed on symmetric form levels (this is essentially dictated by the discussion above), is also sufficient for compactness of the complex Green operator. Let $\I_q = \{J=(j_1,\dots,j_q)\in \N^q: 1\leq j_1< \cdots < j_q\leq n\}$ and
$\Lambda_z^{(0,q)}$ be the space of $(0,q)$-forms at $z$ equipped with
the standard Hermitian metric $|\sum_{J\in\I_q} u_J\, d\z_J|^2 = \sum_{J\in\I_q}|u_J|^2$. When the $q$-tuple $J \notin \mathcal{I}_{q}$, $u_{J}$ is defined in the usual manner by antisymmetry. For a $C^2$-function $\lam(z)$ defined in a neighborhood of $z$, define
\[
H_q[\lam](z,u) = \sum_{K\in\I_{q-1}}\sum_{j,k=1}^n
\frac{\p^2\lam(z)}{\p z_j\p\z_k} u_{jK} \overline{u_{kK}} \; .
\]
\begin{defn} \label{defn:Prop Pt}
$b\Omega$ satisfies $(P_q)$ if for all $M>0$, there exists
$U_M\supset b\Omega$, $\lam_M\in C^2(U_M)$ so that for all 
$z\in U_M$ and $w\in\Lambda_z^{(0,q)}$
\begin{enumerate}
\item $0 \leq \lambda_{M}(z) \leq 1$,

\item $\displaystyle H_q[\lam](z,w) \geq M |w(z)|^2$.
\end{enumerate}
\end{defn}
$(2)$ can be reformulated in two equivalent ways (by standard facts from (multi)linear algebra): $(a)$ the sum of any $q$ (equivalently: the smallest $q$) eigenvalues of $(\partial^{2}\lambda_{M}/\partial z_{j}\partial\overline{z_{k}})_{jk}$ is at least $M$; $(b)$ on any affine subspace of complex dimension $q$ (provided with the inner product from $\mathbb{C}^{n})$, the (real) Laplacian of $\lambda_{M}$ is at least $M$. For $(a)$ this can be seen most easily by working in an orthonormal basis that consists of eigenvectors of $(\partial^{2}\lambda_{M}/\partial z_{j}\partial\overline{z_{k}})_{jk}$. This also gives that $(b)$ implies $(2)$. That $(2)$ implies $(b)$ is an application of the Schur majorization theorem (\cite{HJ85}, Theorem 4.3.26), to the effect that the sum of any $q$ diagonal elements of a Hermitian matrix is at least equal to the sum of the $q$ smallest eigenvalues. Note that if the sum of the smallest $q$ eigenvalues is at least $M$, then so is the sum of the smallest $(q+1)$ (since the additional eigenvalue is necessarily nonnegative). That is, $({P}_{q})$ implies $({P}_{q+1})$ (but not vice versa). Sibony (\cite{Sib87}) studied $(P_q)$ from the point of view of Choquet theory for the cone of functions $\lambda$ with $H_{q}[\lambda] \geq 0$ (actually only for $q=1$, but his arguments work essentially verbatim for $q>1$, see \cite{FuSt01}). This work provides in particular examples of domains with `big' (say of positive measure) sets of points of infinite type in the boundary, whose $\overline{\partial}$-Neumann and complex Green operators are nevertheless compact. 
\begin{thm}\label{thm:Pq implies cptness on bdy}
Let $\Omega\subset \C^n$ be a pseudoconvex domain with $C^\infty$ boundary
and $1\leq q \leq n-2$. 
If $b\Omega$ satisfies  $(P_q)$ and  $(P_{n-1-q})$ (equivalently: $(P_{\tilde{q}})$, where $\tilde{q} = \min \{q, n-1-q\}$), 
then $G_{q}$ and $G_{n-1-q}$ are  compact operators on 
$L^2_{(0,q)}(b\Omega)$ and $L^2_{(0,n-1-q)}(b\Omega)$, respectively.
\end{thm}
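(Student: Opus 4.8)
The plan is to transfer the problem from the boundary to an interior problem on an ``annulus'' between two pseudoconvex domains, where property $(P)$ is known to give compactness of the $\overline\partial$-Neumann operator, and then come back to $b\Omega$ via a jump-type formula. First, since $(P_q)$ implies $(P_{q+1})$, the pair $\{(P_q),(P_{n-1-q})\}$ is equivalent to $(P_{\tilde q})$ with $\tilde q=\min\{q,n-1-q\}$; and via the tilde operators of Koenig (discussed in the excerpt and the appendix), a compactness estimate for $G_q$ is equivalent to one for $G_{n-1-q}$, so it suffices to produce compactness of $G_{\tilde q}$ under $(P_{\tilde q})$. Thus I may assume $1\le q\le (n-1)/2$ and that $(P_q)$ holds on $b\Omega$.

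Next I would set up the geometric comparison. Choose a large ball $B\supset\overline\Omega$; then $A:=B\setminus\overline\Omega$ is an annulus-type domain whose boundary has a pseudoconvex ``inner'' part $b\Omega$ and a (strongly) pseudoconvex ``outer'' part $bB$. Property $(P_q)$ is an intrinsic property of $b\Omega$, hence available near the inner boundary of $A$; near the outer sphere one has strong pseudoconvexity, which trivially supports $(P_q)$ (indeed subellipticity). Patching the two families of weight functions with a cutoff, $A$ satisfies a global $(P_q)$-type condition, so Catlin's theorem (\cite{Cat84}) applied on $A$ yields compactness of the $\overline\partial$-Neumann operator $N_q^A$ on $L^2_{(0,q)}(A)$. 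This is the ``auxiliary compactness result for an annulus'' promised in the abstract.

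Now I would use a jump formula to realize a $(0,q)$-form $u$ on $b\Omega$ as the difference of boundary values of forms on the two sides: solve $\overline\partial$-type problems on $\Omega$ and on $A$ with data built from $u$, and show that $\overline\partial_b u$ and $\overline\partial_b^* u$ control (via trace estimates) the data on $A$, while the solution operator on $A$ is compact by the previous paragraph. Concretely, given $u\in\mathrm{dom}(\overline\partial_b)\cap\mathrm{dom}(\overline\partial_b^*)$ one extends $u$ (and its $\overline\partial_b$, $\overline\partial_b^*$) to a neighborhood, applies $N_q^A$ to an appropriate combination, and reads off on $b\Omega$ a representation $u = (\text{jump of a harmonic/extension piece})$; the harmonic piece is handled by the compactness of $N_q^A$ together with the compactness of the trace/Poisson-type operators (which gain derivatives), and one obtains a compactness estimate
\[
\|u\|_{L^2(b\Omega)}^2 \;\le\; \epsilon\bigl(\|\overline\partial_b u\|^2+\|\overline\partial_b^* u\|^2\bigr) \;+\; C_\epsilon\|u\|_{W^{-1}(b\Omega)}^2
\]
for all such $u$ and all $\epsilon>0$, which is exactly compactness of $G_q$. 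The tilde correspondence then gives compactness of $G_{n-1-q}$ as well.

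The main obstacle I anticipate is the bookkeeping in the jump formula: one must choose the extensions and the combination fed into $N_q^A$ so that (i) the resulting form on $A$ genuinely lies in $\mathrm{dom}(\overline\partial^*)$ there, (ii) the normal/tangential splitting on $b\Omega$ is respected so that $\overline\partial_b u$ and $\overline\partial_b^*$ (rather than the full ambient $\overline\partial u$) appear on the right, and (iii) all the ``error'' operators that arise — commutators with cutoffs, the Poisson extension, boundary traces — are genuinely compact (smoothing of positive order), so that they can be absorbed into the $C_\epsilon\|u\|_{-1}$ term. Getting the normal component to be ``benign'' is the delicate point; this is presumably where the error-term-free version of the tilde operators from the appendix is needed, to keep $\overline\partial_b$ and $\overline\partial_b^*$ cleanly intertwined during the reduction. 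Once the representation is in place, the compactness estimate follows by combining Catlin's theorem on $A$ with standard elliptic/trace estimates and a Rellich-type argument.
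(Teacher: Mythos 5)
There is a genuine gap at the key step: you claim that because $b\Omega$ satisfies $(P_q)$ and the sphere is strongly pseudoconvex, you can patch weight functions and apply Catlin's theorem on the annulus $A=B\setminus\overline\Omega$ to conclude that $N_q^A$ is compact. This fails because $A$ is \emph{not} pseudoconvex: from the side of $A$, the Levi form of a defining function is negative semidefinite on the complex tangent space along the inner boundary $b\Omega$, so in the Kohn--Morrey--H\"ormander identity the boundary integral over $b\Omega$ has the wrong sign and cannot simply be discarded, and Catlin's argument with a family of $(P_q)$ weights does not go through. The correct statement (the paper's Proposition \ref{cor:Nq is cpt, all weights}) is that compactness of $N_q$ on the annulus requires $(P_q)$ on the outer boundary but $(P_{n-1-q})$ on the inner boundary; one uses as weight the \emph{negative} of the $(P_{n-1-q})$ function near $b\Omega$, together with Shaw's identity for the annulus (the weighted analogue of $(3.23)$ in \cite{Shaw85a}) and a Schur-majorization argument showing that the relevant quantity is the sum of the smallest $n-1-q$ eigenvalues of the Hessian. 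Since you have reduced to $q=\tilde q$ and $(P_{\tilde q})$ implies $(P_{n-1-\tilde q})$, the hypothesis you need is in fact available, but the mechanism you invoke (a global $(P_q)$ condition on $A$ plus Catlin) is wrong; the annulus compactness is the technical core of the proof, not a routine corollary of \cite{Cat84}.

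Secondarily, the return from the annulus to $b\Omega$ is left as a sketch whose listed obstacles (membership in $\mathrm{dom}(\overline\partial^*)$ on $A$, control of normal components, compactness of the error operators) are precisely where the work lies. The paper makes this step concrete differently: rather than proving the compactness estimate for all $u\in\mathrm{dom}(\overline\partial_b)\cap\mathrm{dom}(\overline\partial_b^*)$ directly, it proves compactness of the canonical solution operators $\overline\partial_b^*G_q$ and $\overline\partial_b^*G_{q+1}$ (which is equivalent via $G_q=(\overline\partial_b^*G_q)^*(\overline\partial_b^*G_q)+(\overline\partial_b^*G_{q+1})(\overline\partial_b^*G_{q+1})^*$), using the Chen--Shaw jump decomposition $\alpha=\alpha^+-\alpha^-$ of a $\overline\partial_b$-closed form into $\overline\partial$-closed pieces with $W^{1/2}$ bounds, solving $\overline\partial$ on $\Omega$ and on $\Omega^+$ with compact canonical solution operators, and taking traces; moreover, in the middle degree $2q=n-1$ the operator $\overline\partial_b^*G_{q+1}$ is handled by the exact duality of the appendix rather than by another application of the jump argument. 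If you want to salvage your outline, replace the Catlin-on-$A$ step by the weighted Shaw identity argument with inner-boundary condition $(P_{n-1-q})$, and replace the vague ``feed a combination into $N_q^A$'' step by the explicit jump decomposition and trace estimates for $\overline\partial^*N_q^{\Omega}$ and $\overline\partial^*N_q^{\Omega^+}$.
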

We prove Theorem \ref{thm:Pq implies cptness on bdy} in Section \ref{finalproof}. It suffices to produce compact solution operators for $\overline{\partial}_{b}$. To do so, we follow Shaw (\cite{Shaw85}) in representing a $\overline{\partial}_{b}$--closed form $u$ on the boundary as the difference of two $\overline{\partial}$--closed forms, $\alpha^{-}$ on $\Omega$ and $\alpha^{+}$ on the complement: $u=\alpha^{+}-\alpha^{-}$. Then, roughly speaking, $({P}_{q})$ lets us solve the equation $\overline{\partial}\beta^{-}=\alpha^{-}$ on $\Omega$, with suitable compactness estimates, while $({P}_{n-1-q})$ lets us do the same for $\overline{\partial}\beta^{+}=\alpha^{+}$ on an appropriate `annular' region surrounding $\overline{\Omega}$. That the latter can be done follows essentially from work of Shaw in \cite{Shaw85a}. The details are given in Section \ref{annulus} (Proposition \ref{cor:Nq is cpt, all weights}). We mention that in \cite{McN02}, McNeal has introduced a condition called $(\tilde{P}_{q})$ which is implied by $(P_{q})$ and which is sufficient for compactness of $N_{q}$. Whether $(\tilde{P}_{q})$ can take the place of $(P_{q})$ in Theorem \ref{thm:Pq implies cptness on bdy} is open. This has to do with the fact that the exact relationship between $(P)$, $(\tilde{P})$, and compactness is not understood. However, $(P)$ and $(\tilde{P})$ are known to be equivalent on locally convexifiable domains (see the discussion in \cite{Str06}), so that in our next result, $(\tilde{P})$ can take the place of $(P)$. 

Theorem \ref{thm:cpt bdy implies cpt inside}, Theorem \ref{thm:Pq implies cptness on bdy}, and work of Fu and Straube \cite{FuSt98, FuSt01} immediately allow us to characterize compactness of the complex Green operator on smooth bounded locally convexifiable domains. We say that a domain is locally convexifiable if for every boundary point there is a neighborhood, and a biholomorphic map defined on this neighborhood, that takes the intersection of the domain with the neighborhood onto a convex domain.
\begin{thm}\label{thm:convex characterization}
Let $\Omega\subset\C^n$ be a smooth bounded locally convexifiable domain, and let $1\leq q \leq n-2$. Then the following are equivalent:

\noindent $(i)$ The complex Green operator $G_{q}$ is compact.

\noindent $(ii)$ Both $G_{q}$ and $G_{n-1-q}$ are compact.

\noindent $(iii)$ The $\overline{\partial}$-Neumann operators $N_{q}$ and $N_{n-1-q}$ are compact.

\noindent $(iv)$ $b\Omega$ satisfies both $(P_q)$ and $(P_{n-1-q})$.

\noindent $(v)$ $b\Omega$ does not contain (germs of) complex varieties of dimension $q$ nor of dimension $(n-1-q)$.
\end{thm}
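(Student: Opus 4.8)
The plan is to close the chain $(i)\Rightarrow(ii)\Rightarrow(iii)\Rightarrow(iv)\Rightarrow(i)$ together with the equivalence $(iv)\Leftrightarrow(v)$, drawing on Theorems \ref{thm:cpt bdy implies cpt inside} and \ref{thm:Pq implies cptness on bdy}, on the symmetry in form levels for $\dbarb$ recalled in the introduction, and on the characterization of compactness of the $\dbar$-Neumann operator on convex domains due to Fu and Straube \cite{FuSt98, FuSt01}.

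For $(i)\Rightarrow(ii)$ I would argue via the modified Hodge-$*$ (tilde) operators. Compactness of $G_q$ is equivalent to a compactness estimate in which $\|\dbarb u\|$ and $\|\dbarbs u\|$ control $\|u\|$ up to a lower-order term, for $u$ orthogonal to the kernel of $\Box_b$ at level $q$. Since $\|u\|\approx\|\tilde u\|$ while $\dbarb$ and $\dbarbs$ are interchanged by the tilde operator (modulo $O(\|u\|)$, or exactly via the construction of the appendix), this estimate holds at level $q$ if and only if it holds at level $n-1-q$; hence $G_{n-1-q}$ is compact as well. The implication $(ii)\Rightarrow(i)$ is trivial. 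The step $(ii)\Rightarrow(iii)$ is a direct application of Theorem \ref{thm:cpt bdy implies cpt inside} at levels $q$ and $n-1-q$, both of which lie in $\{1,\dots,n-2\}$ because $1\le q\le n-2$; and $(iv)\Rightarrow(i)$ is precisely Theorem \ref{thm:Pq implies cptness on bdy}.

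The steps $(iii)\Rightarrow(iv)$ and $(iv)\Leftrightarrow(v)$ are where the convexifiability hypothesis enters. Here I would use that on a smooth bounded convex domain the three conditions ``$N_j$ is compact'', ``$b\Omega$ satisfies $(P_j)$'', and ``$b\Omega$ contains no germ of a $j$-dimensional complex variety'' are mutually equivalent \cite{FuSt98, FuSt01}. Because $(P_j)$, McNeal's $(\tilde{P}_j)$, and the absence of $j$-dimensional varieties are local properties, preserved under biholomorphic maps of a neighborhood of a boundary point (see \cite{McN02} and the discussion in \cite{Str06}), the equivalences transfer from the convex case to the locally convexifiable case by covering $b\Omega$ with convexifying charts. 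Applying them at $j=q$ and $j=n-1-q$ turns compactness of $N_q$ and $N_{n-1-q}$ into $(P_q)$ and $(P_{n-1-q})$, and simultaneously into the variety-free statement $(v)$, which closes the chain.

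The main obstacle is precisely this transfer: one must check that the Fu--Straube characterization, established for globally convex domains, is genuinely local in character, so that it survives patching together local convexifying charts. Once that is granted (it is in essence contained in \cite{FuSt98, FuSt01, Str06}), the proof is pure bookkeeping with the two theorems of this paper and with the $\dbarb$-symmetry, requiring no further analytic input.
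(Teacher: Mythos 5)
Your proposal is correct and follows essentially the same route as the paper: the $\dbarb$ form-level symmetry gives $(i)\Leftrightarrow(ii)$, Theorem \ref{thm:cpt bdy implies cpt inside} gives $(ii)\Rightarrow(iii)$, Theorem \ref{thm:Pq implies cptness on bdy} gives $(iv)\Rightarrow(ii)$, and the Fu--Straube characterization supplies $(iii)\Leftrightarrow(iv)\Leftrightarrow(v)$. The only difference is that the ``transfer'' you flag as the main obstacle is not needed as a separate step: the equivalence of compactness of $N_{j}$, $(P_{j})$, and the absence of $j$-dimensional varieties is already stated and proved for locally convexifiable (not just convex) domains in \cite{FuSt98, FuSt01}, which is exactly how the paper quotes it.
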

\begin{proof}
On a locally convexifiable domain, compactness of $N_{q}$ is \emph{equivalent} to each of $(iv)$ and $(v)$, \emph{at level $q$} (\cite{FuSt98, FuSt01}). In particular, $(iii)$, $(iv)$, and $(v)$ are equivalent on these domains, and by Theorem \ref{thm:Pq implies cptness on bdy}, they imply $(ii)$. $(i)$ and $(ii)$ are equivalent by the symmetry in the form levels for $\overline{\partial}_{b}$. By Theorem \ref{thm:cpt bdy implies cpt inside}., $(ii)$ implies $(iii)$.
\end{proof}

In Theorems \ref{thm:Pq implies cptness on bdy} and \ref{thm:convex characterization} we assume $1 \leq q \leq n-2$, thus excluding the endpoints $q=0$ and $q=(n-1)$. Formally, this restriction arises because if $q=0$ or $q = (n-1)$, then $\min\{q,n-1-q\} = 0$, and it is not clear what an appropriate interpretation of $(P_{0})$ should be. This is analogous to the situation in the interior. However, $N_{0} =  \overline{\partial}^{*}N_{1}^{2}\overline{\partial} = \overline{\partial}^{*}N_{1}(\overline{\partial}^{*}N_{1})^{*}$ (\cite{ChSh01}, Theorem 4.4.3.), and compactness of $N_{1}$ implies that of $N_{0}$. $(P_{1})$ therefore is a sufficient condition for compactness of $N_{0}$. This situation persists on the boundary. Let $n \geq 3$ and assume $b\Omega$ satisfies $(P_{1})$, and hence $(P_{n-2})$. Then $G_{1}$ and $G_{n-2}$ are compact, by Theorem \ref{thm:Pq implies cptness on bdy}. In turn, this implies that both $G_{0}$ and $G_{n-1}$ are compact, by formulas analogous to the one quoted above for $N_{0}$. That is, $(P_{1})$ is a sufficient condition for compactness of $G_{0}$ and $G_{n-1}$. 

{\bf{Acknowledgment}}: We are indebted to Ken Koenig for pointing out the observation in his paper \cite{Koe04} of the symmetry in form levels for $\overline{\partial}_{b}$ with regard to compactness and subellipticity. He also found some oversights and inaccuracies in an earlier version of this manuscript.

%
%
\section{Proof of Theorem \ref{thm:cpt bdy implies cpt inside}}\label{sec:cpt bdy to int}
Let $W^s(U)$ be the usual Sobolev space of order $s$ on $U$ ($U$ may be an open subset of $\mathbb{C}^{n}$ or of $b\Omega$) and let $W^s_{(0,q)}(U)$ be space of $(0,q)$-forms with coefficients in $W^s(U)$. We first express compactness of $G_{q}$ in the usual way in terms of a family of estimates. Let $s \geq 0$. Then for any $\ep>0$, there exists $C_\ep>0$ so that
\begin{equation}\label{compact1}
\|u\|_{W^{s}(b\Omega)}
\leq \ep\big( \|\dbarb u\|_{W^{s}(b\Omega)} + \|\dbarbs u\|_{W^{s}(b\Omega)}\big)
+ C_{\ep} \|u\|_{W^{-1}(b\Omega)} \; ,
\end{equation}
for all $u \in \text{dom}(\overline{\partial}_{b}) \cap \text{dom}(\overline{\partial}_{b}^{*})$ when $1\leq q\leq (n-2)$. When $q= (n-1)$, we assume that $u \perp \text{ker}(\overline{\partial}_{b}^{*})$ (equivalently $u \in \text{Im}(\overline{\partial}_{b})$). When $s=0$, \eqref{compact1} is the standard compactness estimate that is equivalent to compactness of $G_{q}$; the proof is the same as that for the corresponding statement concerning the $\overline{\partial}$-Neumann operator (see for example \cite{FuSt01}, Lemma 1.1). This remark applies likewise to lifting the estimate to higher Sobolev norms. 

The main a priori estimate to be proved is as follows: for all $\ep>0$, there exists $C_\ep>0$ so that if $u\in\Dom(\dbars) \cap W^2_{(0,q)}(\Omega)$, $\dbar u\in W^2_{(0,q+1)}(\Omega), \text{ and }
\dbars u\in W^2_{(0,q-1)}(\Omega)$, then 
\begin{equation}\label{eqn:compactness of domain dbar, dbars in H1}
\| u \|_{W^2(\Omega)} \leq \ep\big(\|\dbar u\|_{W^2(\Omega)}
+ \|\dbars u\|_{W^2(\Omega)}\big)
+ C_{\ep} \|u\|_{W^1(\Omega)} \; .
\end{equation}
Fix a defining function $\rho$ for $\Omega$, so that $|\nabla\rho|=1$ near $b\Omega$ (i.e. take $\rho$ to agree with the signed boundary distance near $b\Omega$). We first estimate the normal component of $u=\sum_{J\in \mathcal{I}_{q}}u_{J}d\overline{z_{J}} \in W^{2}_{(0,q)}(\Omega)\cap \text{dom}(\overline{\partial}^{*})$. This component is given by $u_{norm}=\sum_{j=1}^{n}\sum_{K\in \mathcal{I}_{q-1}}(\partial\rho /\partial z_{j})u_{jK}d\overline{z_{K}}$, and its trace on $b\Omega$ vanishes. Because $\vartheta\overline{\partial}+ \overline{\partial}\vartheta$ acts coefficientwise as a constant multiple of the (real) Laplacian, we obtain 
\begin{multline}\label{u-normal}
\|u_{norm}\|_{W^2(\Omega)} \leq \|\Delta u_{norm}\|_{L^{2}(\Omega)} \\
\leq C \big(\|\dbar u\|_{W^1(\Omega)} + \|\dbars u\|_{W^1(\Omega)} + \|u\|_{W^1(\Omega)}\big) \\
\leq \ep \big(\|\dbar u\|_{W^2(\Omega)} + \|\dbars u\|_{W^2(\Omega)}\big) + 
C_\ep\|u\|_{W^1(\Omega)} \; ;
\end{multline}
it is assumed that $u$ is as in Theorem \ref{thm:cpt bdy implies cpt inside}. The last inequality in \eqref{u-normal} comes from interpolating Sobolev norms ($\|f\|_{W^{1}(\Omega)} \leq \varepsilon\|f\|_{W^{2}(\Omega)}+C_{\varepsilon}\|f\|_{L^{2}(\Omega)}$).
Similarly, we have that $u_{norm} \in W^{3}_{(0,q-1)}(\Omega)$ and 
\begin{equation}\label{u-norm2}
\|u_{norm}\|_{W^3(\Omega)} \leq \|\Delta u_{norm}\|_{W^{1}(\Omega)} 
\leq C \big(\|\dbar u\|_{W^2(\Omega)} + \|\dbars u\|_{W^2(\Omega)} + \|u\|_{W^2(\Omega)}\big) \; .
\end{equation}
\eqref{u-norm2} is important because it implies that $\|\overline{\partial}u_{norm}\|_{W^{2}(\Omega)}$ and $\|\overline{\partial}^{*}u_{norm}\|_{W^{2}(\Omega)}$ are also dominated by the right hand side of \eqref{u-norm2}. (Note that $u_{norm}$ has vanishing trace on the boundary, so it \emph{is} in $\text{dom}(\overline{\partial}^{*})$.) Then so are  $\|\overline{\partial}u_{tan}\|_{W^{2}(\Omega)}$ and $\|\overline{\partial}^{*}u_{tan}\|_{W^{2}(\Omega)}$, where $u_{tan}=u-(\overline{\partial}\rho\wedge u_{norm})$ is the tangential part of $u$. As a result, it now only remains to establish \eqref{eqn:compactness of domain dbar, dbars in H1} for $u_{tan}$.

Let $\ep>0$. As in \eqref{u-normal}, we have
\begin{multline}\label{u-tan1}
\|u_{tan}\|_{W^{2}(\Omega)} 
\approx \|\triangle u_{tan}\|_{L^2(\Omega)}  + \|u_{tan}\|_{W^{3/2}(b\Omega)} \\
\lesssim\big(\|\dbar u_{tan}\|_{W^1(\Omega)} + \|\dbars u_{tan}\|_{W^1(\Omega)}\big)
+ \|u_{tan}\|_{W^{3/2}(b\Omega)} \\
\leq \ep \big(\|\dbar u_{tan}\|_{W^2(\Omega)} + \|\dbars u_{tan}\|_{W^2(\Omega)}\big)
+C_\ep \| u_{tan}\|_{W^1(\Omega)}+\|u_{tan}\|_{W^{3/2}(b\Omega)} \; .
\end{multline}
We are going to apply \eqref{compact1} to the last term on the right hand side of \eqref{u-tan1}. Note that by definition, $\overline{\partial}_{b}(u_{tan}|_{b\Omega})$ is the trace of $\overline{\partial}u_{tan}$, projected onto the tangential $(q+1)$-forms. Because $u_{tan}$ is tangential near the boundary, $\overline{\partial}_{b}^{*}(u_{tan}|_{b\Omega})$ equals the trace of $\overline{\partial}^{*}u_{tan}$ on the boundary, modulo a term of order zero (i.e. involving no derivatives of $u_{tan}$). Therefore, with $s=3/2$ in \eqref{compact1},
\begin{multline}\label{u-tan2}
\|u_{tan}\|_{W^{3/2}(b\Omega)}
\leq \ep \big(\|\dbarb (u_{tan})\|_{W^{3/2}(b\Omega)}
+ \|\dbarbs (u_{tan})\|_{W^{3/2}(b\Omega)}\big)
+ C_{\ep} \|u_{tan}\|_{W^{-1}(b\Omega)} \\
\leq \ep \big(\|\dbar u_{tan}\|_{W^{3/2}(b\Omega)} 
+ \|\dbars u_{tan}\|_{W^{3/2}(b\Omega)} + \|u_{tan}\|_{W^{3/2}(b\Omega)}\big)
+ C_{\ep} \| u_{tan}\|_{W^{1/2}(b\Omega)} \\
\leq \ep \big(\|\dbar u_{tan}\|_{W^2(\Omega)}
+ \|\dbars u_{tan}\|_{W^2(\Omega)} + \|u_{tan}\|_{W^{2}(\Omega)}\big)
+ C_{\ep} \|u_{tan}\|_{W^{1}(\Omega)} \; .
\end{multline} 
Putting \eqref{u-tan2} into the right hand side of \eqref{u-tan1} (for $\|u_{tan}\|_{W^{3/2}(b\Omega)}$) and absorbing $\varepsilon\|u_{tan}\|_{W^{2}(\Omega)}$ gives \eqref{eqn:compactness of domain dbar, dbars in H1} for $u_{tan}$. With this, by what was said above, \eqref{eqn:compactness of domain dbar, dbars in H1} is established for $u$.

Because \eqref{eqn:compactness of domain dbar, dbars in H1} is only an a priori estimate, and $N_{q}u$ is not (yet) known to be in $W^{2}_{(0,q)}(\Omega)$ for $u \in W^{2}_{(0,q)}(\Omega)$, we work first with the regularized operators $N_{\delta,q}$, $0<\delta \leq 1$, arising from elliptic regularization (\cite{FoKo72}, 2.3, \cite{Ta96}, 12.5). $N_{\delta,q}$ inverts (in $L^{2}_{(0,q)}(\Omega)$) the operator $\Box_{\delta,q}$, the unique self-adjoint operator associated to the quadratic form $Q_{\delta}(u,v)=(\overline{\partial}u,
\overline{\partial}v)_{L^{2}(\Omega)}+(\overline{\partial}^{*}u,
\overline{\partial}^{*}v)_{L^{2}(\Omega)}+\delta(\nabla u,\nabla v)_{L^{2}(\Omega)}$, with form domain $W^{1}_{(0,q)}(\Omega) \cap \text{dom}(\overline{\partial}^{*})$. Equivalently: for $u \in L^{2}_{(0,q)}(\Omega)$, $v \in W^{1}_{(0,q)}(\Omega) \cap \text{dom}(\overline{\partial}^{*})$, $(u,v)_{L^{2}(\Omega)}=Q_{\delta}(N_{\delta,q}u,v)$. Note that for $u \in \text{dom}(\Box_{\delta,q})$, 
\begin{equation}\label{boxdelta}
\Box_{\delta,q}u=((-1/4)+\delta)\Delta u \; ,
\end{equation} 
where $\Delta$ acts coefficientwise. (Nonetheless, $\Box_{\delta,q}$ is \emph{not} a multiple of $\Box_{q}$; the domain has changed.) $Q_{\delta}(u,u)$ is coercive (it dominates $\|u\|_{W{1}(\Omega)}$), and consequently $N_{\delta,q}$ gains two derivatives in Sobolev norms (see e.g. \cite{Ta96}, 12.5). In particular, when $u \in C^{\infty}_{(0,q)}(\overline{\Omega})$, then so is $N_{\delta,q}u$.

We now claim that the $N_{\delta,q}$ are compact on $W^{2}_{(0,q)}(\Omega)$, `uniformly' in $\delta >0$. That is, we claim the following uniform compactness estimate: for every $\varepsilon >0$, there exists a constant $C_{\varepsilon}$ such that for $u \in W^{2}_{(0,q)}(\Omega)$ and $0<\delta\leq 1$,
\begin{equation}\label{Ndelta1}
\|N_{\delta, q} u\|_{W^2(\Omega)}^2 \leq \ep \|u\|_{W^2(\Omega)}^2 + C_\ep \|u\|_{L^2(\Omega)}^2 \; .
\end{equation}
(This type of estimate is equivalent to compactness, compare \cite{McN02}, Lemma 2.1; \cite{DAngelo02}, Proposition~V.2.3$\,$.) Because $C^{\infty}_{(0,q)}(\overline{\Omega})$ is dense in $W^{2}_{(0,q)}(\Omega)$ (and $N_{\delta,q}$ is continuous in $W^{2}_{(0,q)}(\Omega)$), it suffices to establish \eqref{Ndelta1} for $u \in C^{\infty}_{(0,q)}(\overline{\Omega})$. So let $u \in C^{\infty}_{(0,q)}(\overline{\Omega})$. In the following estimates, all constants will be uniform in $\delta$. We first apply \eqref{eqn:compactness of domain dbar, dbars in H1} to $N_{\delta,q}u$:
\begin{equation}\label{Ndelta2}
\|N_{\delta,q}u\|_{W^{2}(\Omega)} \leq \varepsilon (\|\overline{\partial}N_{\delta,q}u\|_{W^{2}(\Omega)}+\|\overline{\partial}^{*}N_{\delta,q}u\|_{W^{2}(\Omega)}) + C_{\varepsilon}\|N_{\delta,q}u\|_{W^{1}(\Omega)} \; .
\end{equation}
Interpolating Sobolev norms, using that $N_{\delta,q}$ is bounded in $L^{2}_{(0,q)}(\Omega)$ with a bound that is uniform in $\delta$, and absorbing $\varepsilon\|N_{\delta,q}U\|_{W^{2}(\Omega)}$, \eqref{Ndelta2} gives
\begin{equation}\label{Ndelta3}
\|N_{\delta,q}u\|_{W^{2}(\Omega)} \leq \varepsilon (\|\overline{\partial}N_{\delta,q}u\|_{W^{2}(\Omega)}+\|\overline{\partial}^{*}N_{\delta,q}u\|_{W^{2}(\Omega)}) + C_{\varepsilon}\|u\|_{L^{2}(\Omega)}\; .
\end{equation}
The estimate $\|\overline{\partial}N_{q}u\|_{W^{2}(\Omega)}+\|\overline{\partial}^{*}N_{q}u\|_{W^{2}(\Omega)} \lesssim \|N_{q}u\|_{W^{2}(\Omega)}+\|u\|_{W^{2}(\Omega)}$ is standard; we next show that it remains valid for the regularized operators $N_{\delta,q}$, with constants uniform in $\delta$. This is known, but somewhat hard to pinpoint in the literature. 

By interior elliptic regularity (uniform in $\delta >0$, in view of \eqref{boxdelta}) (and interpolation of Sobolev norms), we can estimate $\varepsilon(\|\overline{\partial}N_{\delta,q}u\|_{W^{2}(U)} + \|\overline{\partial}^{*}N_{\delta,q}u\|_{W^{2}(U)})$ by the right hand side of \eqref{Ndelta1} for any relatively compact subdomain $U$ of $\Omega$. As a result, it suffices to estimate $\varphi\overline{\partial}N_{\delta,q}u$ and
$\varphi\overline{\partial}^{*}N_{\delta,q}u$ for a smooth cutoff function compactly supported in a special boundary chart. We will also let differential operators act coefficientwise in the associated special boundary frame. Then, a tangential derivative will preserve the domain of $\overline{\partial}^{*}$. (For information on special boundary charts and frames, the reader may consult \cite{FoKo72} or \cite{ChSh01}.) We denote by $\nabla_T$ the gradient with respect to the tangential variables
and by $\p/\p\nu$ the normal derivative.

For the tangential derivatives, we have
\begin{multline}\label{Ndelta4}
\|\nabla_{T}^{2}\varphi\overline{\partial}N_{\delta,q}u\|_{L^{2}(\Omega)}^{2}
+ \|\nabla_{T}^{2}\varphi\overline{\partial}^{*}N_{\delta,q}u\|_{L^{2}(\Omega)}^{2} \\
\lesssim \|\varphi\overline{\partial}\nabla_{T}^{2}N_{\delta,q}u\|_{L^{2}(\Omega)}^{2}
+ \|\varphi\overline{\partial}^{*}\nabla_{T}^{2}N_{\delta,q}u\|_{L^{2}(\Omega)}^{2} + \|N_{\delta,q}u\|_{W^{2}(\Omega)}^{2} \\
\lesssim Q_{\delta}(\nabla_{T}^{2}N_{\delta,q}u,\nabla_{T}^{2}N_{\delta,q}u) + \|N_{\delta,q}u\|_{W^{2}(\Omega)}^{2} \;\;\;\; \\
\lesssim |(\nabla_{T}^{2}u,\nabla_{T}^{2}N_{\delta,q}u)| + \|N_{\delta,q}u\|_{W^{2}(\Omega)}^{2} \\
\lesssim  \|N_{\delta,q}u\|_{W^{2}(\Omega)}^{2} + \|u\|_{W^{2}(\Omega)}^{2} \; . \;\;\;\;\;\;\;\;\;\;\;\;\;\;\;\;\;\;\;\;\;\;\;\;\;\;\;\;\;\;\;\;\;\;\;\;\;\;\;\;\;\;\;\;
\end{multline}
Here, constants are allowed to depend on $\varphi$ (but not on $\delta$). We have used the estimate $Q_{\delta}(\nabla_{T}^{2}N_{\delta,q}u,\nabla_{T}^{2}N_{\delta,q}u)          \lesssim |(\nabla_{T}^{2}u,\nabla_{T}^{2}N_{\delta,q}u)| + \|N_{\delta,q}u\|_{W^{2}(\Omega)}^{2}$, which follows from \cite{KoNi65}, Lemma 3.1, \cite{FoKo72}, Lemma 2.4.2$\;$. In the case at hand, it can be established by the usual procedure: repeated integration by parts and commuting operators as necessary to make terms of the form $Q_{\delta}(N_{\delta,q}u,v)=(u,v)$ appear.

We now come to the normal derivatives. Expressing the real Laplacian in the coordinates of the special boundary chart gives
\begin{equation}\label{eqn:2 normal derivs}
\Big\| \frac{\p^2 v}{\p\nu^2} \Big\|_{L^2(\Omega)}^2
\leq C_{q,\Omega}\Big( \|\triangle v\|_{L^2(\Omega)}^2
+ \| v \|_{W^1(\Omega)}^2 + \|\nabla_T^2 v \|_{L^2(\Omega)}^2\Big)
\end{equation}
Note that $\| \triangle \varphi \dbar N_{\delta,q} u  \|_{L^2(\Omega)}^2
\approx \|\varphi \overline{\partial}\Delta N_{\delta,q} u  \|_{L^2(\Omega)}^2 + \|\overline{\partial}N_{\delta,q}u\|_{W^{1}(\Omega)}
\lesssim \|u\|_{W^1(\Omega)}^2 + \|N_{\delta,q}u\|_{W^{2}(\Omega)}$, in view of \eqref{boxdelta}. There is a similar estimate for the Laplacian of $\varphi\dbars N_{\delta,q} u$. Applying (\ref{eqn:2 normal derivs}) to $v = \varphi\dbar N_{\delta,q} u$ and $\varphi\dbars N_{\delta,q} u$ respectively, and using \eqref{Ndelta4} for the $\nabla_{T}^{2}$ terms now shows that
\begin{equation}\label{Ndelta5}
\Big\| \frac{\p^2 (\varphi\dbar N_{\delta,q} u)}{\p\nu^2} \Big\|_{L^2(\Omega)}^2 
+ \Big\| \frac{\p^2 (\varphi\dbars N_{\delta,q} u)}{\p\nu^2} \Big\|_{L^2(\Omega)}^2
 \lesssim \|N_{\delta,q}u\|_{W^{2}(\Omega)}^{2}+\|u\|_{W^{2}(\Omega)}^{2} \;.
\end{equation}

Concerning the $L^{2}$-norms of the mixed derivatives, we note that they are dominated by the $L^{2}$-norms of the pure derivatives (\cite{LiMa72}, Theorem 7.4), and therefore by the right hand sides of \eqref{Ndelta4} and \eqref{Ndelta5}. (We remark that using this fact is merely a convenience, not a necessity; normal derivatives can be expressed in terms of tangential ones, $\overline{\partial}$, $\vartheta$, and terms of order zero.) Together with \eqref{Ndelta3}, \eqref{Ndelta4}, and \eqref{Ndelta5}, this gives (via a partition of unity subordinate to a cover of the boundary by special boundary charts, and summing over the charts plus a compactly supported term)
\begin{equation}\label{Ndelta6}
\|N_{\delta,q}u\|_{W^{2}(\Omega)} \leq \varepsilon (\|N_{\delta,q}u\|_{W^{2}(\Omega)}+ \|u\|_{W^{2}(\Omega)}) + C_{\varepsilon}\|u\|_{L^{2}(\Omega)} \; ,
\end{equation}
where the family $C_{\varepsilon}$ has been rescaled. After absorbing $\varepsilon \|N_{\delta,q}u\|_{W^{2}(\Omega)}$, \eqref{Ndelta6} gives \eqref{Ndelta1} (first for $\varepsilon$ less than $1/2$, say; but that is sufficient).

Because the constant $C_{\varepsilon}$ in \eqref{Ndelta1} is independent of $\delta$, we can let $\delta$ tend to zero and obtain \eqref{Ndelta1} with $N_{q}u$. Indeed, if $u\in W^2_{(0,q)}(\Omega)$, then $\{ N_{\delta,q} u : 0<\delta\leq 1\}$ is a bounded set in $W^2_{(0,q)}(\Omega)$. So there exists a sequence $\delta_n\to0$ and $\hat u\in W^2_{(0,q)}(\Omega)$ 
so that $N_{\delta_n,q} u \rightarrow  u$ weakly in $W^2_{(0,q)}(\Omega)$.
One easily checks that $\hat{u} \in \text{dom}(\overline{\partial}^{*})$. Also, if $v\in W^1_{(0,q)}(\Omega)\cap\Dom(\dbars)$, then $\lim_{n\to\infty} Q_{\delta_n}(N_{{\delta_n},q}u,v) = Q(\hat u,v)$. However, $Q_{\delta_n}(N_{\delta_n,q}u,v) = (u,v) = Q(N_q u,v)$. Thus, $Q(\hat u,v)=
Q(N_q u,v)$ for all $v \in W^{1}_{(0,q)}(\Omega) \cap \text{dom}(\overline{\partial}^{*})$, whence $\hat u = N_q u$ (since $W^{1}_{(0,q)}(\Omega) \cap \text{dom}(\overline{\partial}^{*})$ is dense in $\text{dom}(\overline{\partial}) \cap \text{dom}(\overline{\partial}^{*})$ with respect to the graph norm induced by $Q$). Consequently, $N_q u$ is in $W^2_{(0,q)}(\Omega)$ and satisfies \eqref{Ndelta1}, and so is a compact operator on $W^2_{(0,q)}(\Omega)$.

It remains to be seen that $N_{q}$ is compact on $L^{2}_{(0,q)}(\Omega)$. This turns out to be a consequence of its compactness on $W^{2}_{(0,q)}(\Omega)$ and a general fact from functional analysis which we now state. Suppose $H$ is a Hilbert space and $B \subset H$ is a dense subspace. Assume that $B$ is provided with a norm under which it is complete, and under which it embeds continuously into $H$ (i.e. $\|u\|_{H} \leq C\|u\|_{B}$ for $u \in B$). Suppose $T$ is a bounded linear operator on $B$ which is symmetric with respect to the inner product induced from $H$: $(Tu,v)_{H}=(u,Tv)_{H}$ for all $u,v \in B$. If $T$ is compact on $B$, then it (has a unique extension to $H$ which) is compact on $H$. This is Corollary II from \cite{Lax54}. In our situation, it suffices to take $H = L^{2}_{(0,q)}(\Omega)$, $B=W^{2}_{(0,q)}(\Omega)$, and $T=N_{q}$, to conclude that $N_{q}$ is compact on $L^{2}_{(0,q)}(\Omega$). 

This concludes the proof of Theorem \ref{thm:cpt bdy implies cpt inside}. 

\begin{rem}\label{alsoforsubelliptic}
\emph{Our proof above,combined with \cite{Str95}, yields a new proof that subellipticity of $G_{q}$ implies subellipticity of $N_{q}$, $1 \leq q \leq (n-2)$. We give an outline. Assume $G_{q}$ is subelliptic of order $2s$. Equivalently: $\|u\|_{W^{s}(b\Omega)} \lesssim  \|\overline{\partial}_{b}u\|_{L^{2}(b\Omega)}+\|\overline{\partial}_{b}^{*}u\|_{L^{2}(b\Omega)}$ for $u \in \text{dom}(\overline{\partial}_{b}) \cap \text{dom}(\overline{\partial}_{b}^{*})$. First, our arguments above can be followed almost verbatim to obtain that $N_{q}$ maps $W^{2-s}_{(0,q)}(\Omega)$ to $W^{2+s}_{(0,q)}(\Omega)$. One change needed is in (the analogue of) \eqref{Ndelta4}: the inner product $(\nabla_{T}^{2}u,\nabla_{T}^{2}N_{q}u)$ has to be estimated by $|(\nabla_{T}^{2}u,\nabla_{T}^{2}N_{q}u)| \lesssim
\|\nabla_{T}^{2}u\|_{W^{-s}(\Omega)}\|\nabla_{T}^{2}N_{q}u\|_{W^{s}(\Omega)} \lesssim \|u\|_{W^{2-s}(\Omega)}\|N_{q}u\|_{W^{2+s}(\Omega)} \leq (s.c.)\|N_{q}u\|_{W^{2+s}(\Omega)}^{2}+(l.c.)\|u\|_{W^{2-s}(\Omega)}^{2}$. Note that a simplification occurs in that we no longer have to work with the regularized operators: $N_{q}$ is already known to be compact (by Theorem \ref{thm:cpt bdy implies cpt inside}), hence to be globally regular. That this translates into honest subellipticity of $N_{q}$ follows from arguments used in \cite{Str95} in a closely related context. These arguments are also based on \cite{Lax54}. (Very) roughly speaking, they are as follows. Denote by $\Lambda^{s}$ the standard tangential Bessel potential operators of order $s$ (alternatively: the $s$-th power of the tangential Laplace-Beltrami operator may be used). Then $N_{q}$ mapping $W^{2-s}_{(0,q)}(\Omega)$ (continuously) to $W^{2+s}_{(0,q)}(\Omega)$ is equivalent to $\Lambda^{s}N_{q}\Lambda^{s}$ (we are omitting cutoff functions) being continuous on $W^{2}_{(0,q)}(\Omega)$. But $\Lambda^{s}N_{q}\Lambda^{s}$ is symmetric with respect to the $L^{2}$ inner product, and so is then also continuous in $L^{2}_{(0,q)}(\Omega)$, by \cite{Lax54}, Theorem I. (Theorem I is considerably more elementary than Corollary II used above for compactness.) As a result, $N_{q}$ maps $W^{-s}_{(0,q)}(\Omega)$ continuously to $W^{s}_{(0,q)}(\Omega)$. By interpolation with the continuity from $W^{2-s}_{(0,q)}(\Omega)$ to $W^{2+s}_{(0,q)}(\Omega)$, $N_{q}$ maps $L^{2}_{(0,q)}(\Omega)$ continuously to $W^{2s}_{(0,q)}(\Omega)$ (see e.g. \cite{LiMa72}, Theorem 12.4, for the interpolation between $W^{-s}(\Omega)$ and $W^{2-s}(\Omega)$, negative indices require some care).}
\end{rem}

%
%
\section{Compactness of the $\dbar$-Neumann operator on an `annulus'}\label{annulus}
In this section we prove an auxiliary result that will be used in the proof of Theorem \ref{thm:Pq implies cptness on bdy}, but which is of independent interest. If $\Omega$ and $\Omega_{1}$ are two bounded pseudoconvex domains with $\overline{\Omega} \subset \Omega_{1}$, we call $\Omega^{+} := \Omega_{1} \setminus \overline{\Omega}$ an `annulus'. The $\dbar$-Neumann problem on $\Omega^+$ has been studied in \cite{Shaw85a}. Since $\Omega^+$ is not pseudoconvex, $\ker(\Box_{q})$ need not be trivial, and we let
\[
\H_{q} = \ker(\Box_{q}) = \{ u\in \Dom(\dbar)\cap\Dom(\dbars) : \dbar u =0,\ \dbars u =0\},
\]
the harmonic $(0,q)$-forms. Let $H_{q}$ be the orthogonal projection of $L^2_{(0,q)}(\Omega^+)$ onto $\H_{q}$. 
\begin{prop}\label{cor:Nq is cpt, all weights}
Let $\Omega^{+}$ be an `annulus' as above, with smooth boundary, let $1 \leq q \leq (n-2)$. Assume the outer boundary of $\Omega^{+}$ satisfies $(P_{q})$, and the inner boundary satisfies $({P}_{n-1-q})$. Then $\H_{q}$ is finite dimensional and $N_{q}$ is compact. 
\end{prop}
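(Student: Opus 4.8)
The plan is to reduce everything to compactness estimates for $\overline{\partial}$ on $(0,q)$-forms over $\Omega^{+}$, handled by splitting the boundary contributions according to which component the relevant form ``lives near.'' First I would recall (from \cite{Shaw85a}) that on the annulus $\Omega^{+}$ the range of $\overline{\partial}$ at level $q$ is closed, so $L^{2}_{(0,q)}(\Omega^{+})$ decomposes as $\Ran(\overline{\partial})\oplus\H_{q}\oplus\Ran(\overline{\partial}^{*})$, and $N_{q}$ is the inverse of $\Box_{q}$ on the orthogonal complement of $\H_{q}$. As is standard, compactness of $N_{q}$ is equivalent to a compactness estimate
\[
\|u\|_{L^{2}(\Omega^{+})}^{2}\leq \varepsilon\bigl(\|\overline{\partial}u\|^{2}+\|\overline{\partial}^{*}u\|^{2}\bigr)+C_{\varepsilon}\|u\|_{W^{-1}(\Omega^{+})}^{2}
\]
for $u\in\Dom(\overline{\partial})\cap\Dom(\overline{\partial}^{*})$ with $u\perp\H_{q}$; and by the usual argument (cf. the discussion in the excerpt around \eqref{compact1}) it suffices to produce compact solution operators for $\overline{\partial}$ at levels $q$ and $q-1$, together with finite-dimensionality of $\H_{q}$. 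The key point is that $\Omega^{+}$ has two boundary components of opposite ``convexity type'': near the outer boundary $b\Omega_{1}$ one is in a pseudoconvex situation at form level $q$, where $(P_{q})$ supplies a weighted compactness estimate exactly as in the interior theory of \cite{Cat84, Sib87, FuSt01}; near the inner boundary $b\Omega$ the domain $\Omega^{+}$ sits on the \emph{pseudoconcave} side, and one uses the modified Hodge-$*$ / tilde correspondence (the version without zero-order errors from the appendix, section \ref{appendix}): a $(0,q)$-form near $b\Omega$ corresponds to an $(0,n-1-q)$-form for which $b\Omega$ now looks pseudoconvex, so $(P_{n-1-q})$ on $b\Omega$ gives the needed estimate there.

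The steps, in order: (1) Record the closed-range/Hodge decomposition on $\Omega^{+}$ from \cite{Shaw85a} and the reduction of compactness of $N_{q}$ to a compactness estimate on forms orthogonal to $\H_{q}$, plus the further reduction to compact $\overline{\partial}$-solution operators at levels $q-1$ and $q$ and $\dim\H_{q}<\infty$. (2) Fix a cover of $\overline{\Omega^{+}}$ by three pieces — an interior piece, a collar of $b\Omega_{1}$, and a collar of $b\Omega$ — with a subordinate partition of unity. (3) On the $b\Omega_{1}$-collar, run the standard weighted ($L^{2}$ with weight $e^{-t\lambda_{M}}$) Kohn–Morrey–Hörmander argument using the family $\lambda_{M}$ from $(P_{q})$ to get, for every $M$, an estimate $\|u\|^{2}\lesssim \frac{1}{M}(\|\overline{\partial}u\|^{2}+\|\overline{\partial}^{*}u\|^{2})+(\text{l.o.t.})$, exactly as in \cite{Cat84, Sib87, FuSt01}; this is legitimate here because $b\Omega_{1}$ is the pseudoconvex (outer) boundary, so the boundary term in the Morrey–Kohn–Hörmander identity has the favorable sign. (4) On the $b\Omega$-collar, apply the tilde operators to transfer $u$ to a form $\widetilde{u}$; since $\Omega^{+}$ lies on the pseudoconcave side of $b\Omega$, the boundary term for $\widetilde u$ at level $n-1-q$ again has the favorable sign, and $(P_{n-1-q})$ on $b\Omega$ yields the analogous $M$-estimate; transferring back (using $\|u\|\approx\|\widetilde u\|$ and the intertwining relations) gives the estimate for $u$ near $b\Omega$. (5) Patch the three estimates with the partition of unity — the commutator terms $[\overline{\partial},\varphi_{j}]$ and $[\overline{\partial}^{*},\varphi_{j}]$ are zero-order in $u$, hence absorbed into the lower-order term — to obtain the global compactness estimate on $\Omega^{+}$; conclude finite-dimensionality of $\H_{q}$ (a compactness estimate forces the unit ball of $\H_{q}$ to be precompact) and compactness of $N_{q}$.

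The main obstacle I expect is Step (4): making the tilde-transfer near the inner, pseudoconcave boundary \emph{rigorous} and compatible with the weighted estimates. One must check that the appendix's tilde operator (a) is local enough to be localized to the $b\Omega$-collar, (b) maps $\Dom(\overline{\partial})\cap\Dom(\overline{\partial}^{*})$ on $\Omega^{+}$-forms to the corresponding domain at level $n-1-q$ (the boundary condition $u\in\Dom(\overline{\partial}^{*})$ must go over to $\widetilde u\in\Dom(\overline{\partial}^{*})$ on the \emph{other} side), and (c) interacts correctly with the weight $e^{-t\lambda_{M}}$ so that the favorable boundary term is genuinely the one controlled by $(P_{n-1-q})$. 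Equivalently, one can bypass the tilde operators near $b\Omega$ and instead verify directly that the Morrey–Kohn–Hörmander boundary integral on $\Omega^{+}$ at level $q$, evaluated on the inner boundary, is controlled from below by $H_{n-1-q}[\lambda](\cdot)$ applied to the ``dual'' index set — but either way, reconciling the pseudoconcave boundary term with a $(P)$-type hypothesis at the complementary form level is the delicate point; this is precisely the content that ``follows essentially from work of Shaw in \cite{Shaw85a}'' and must be cited and adapted carefully.
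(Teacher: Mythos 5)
The decisive gap is your Step (4). The tilde operators of section \ref{appendix} act on forms \emph{on $b\Omega$} and intertwine $\overline{\partial}_{b}$ with $\overline{\partial}_{b}^{*}$; they have no analogue for forms on the collar of $b\Omega$ inside $\Omega^{+}$ that is compatible with the $\overline{\partial}$-Neumann boundary condition. Indeed, as noted in Section \ref{sec:results}, the analogous construction applied to forms on a domain produces a form which in general is \emph{not} in $\Dom(\overline{\partial}^{*})$; moreover, for domain forms the natural Hodge-type duality pairs degree $q$ with $n-q$ (and turns the condition $u\in\Dom(\overline{\partial}^{*})$ into the dual condition that the pullback to the boundary vanishes, i.e.\ a $\overline{\partial}$-Cauchy problem), not with $n-1-q$. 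So there is no local transfer near the inner boundary after which that boundary ``looks pseudoconvex'' at level $n-1-q$ with the free boundary condition preserved, and the estimate near $b\Omega$ cannot be obtained by ``dualize, apply $(P_{n-1-q})$, dualize back.'' Your fallback suggestion (verify directly that the boundary integral at level $q$ is controlled in terms of a Hessian condition at level $n-1-q$) is the right idea, but it is the whole content of the step, not a routine check: in the \emph{standard} Kohn--Morrey--H\"{o}rmander identity the inner-boundary Levi term is genuinely of the unfavorable sign and no bounded weight repairs it.

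What is actually needed is Shaw's modified identity from \cite{Shaw85a} (see \eqref{ann1}), in which both the Hessian-of-the-weight term and the boundary term appear in the ``trace-subtracted'' form \eqref{ann1a}--\eqref{ann2}: pseudoconvexity of $b\Omega$ from inside $\Omega$ then makes the trace-subtracted boundary term nonnegative, and choosing the weight near $b\Omega$ to be $-\mu_{M}$ (note the sign reversal), with $\mu_{M}$ furnished by $(P_{n-1-q})$, makes the trace-subtracted Hessian term at least $M|u|^{2}$ by the eigenvalue/Schur-majorization algebra; the normal components must be handled separately. A second, smaller gap: your weighted estimates involve $\overline{\partial}_{\lambda_{M}}^{*}$, which differs from $\overline{\partial}^{*}$ by a zero-order term with an $M$-dependent bound, so they do not directly yield the unweighted compactness estimate for $u\perp\mathcal{H}_{q}$ announced in Step (1). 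One instead deduces closed range and $\dim\mathcal{H}_{q}<\infty$ from \eqref{ann4} and proves compactness of $N_{q}$ through the canonical solution operators, comparing the unweighted minimal solution with the weighted one as in \eqref{ann8}, with the additional argument \eqref{ann9} for $\overline{\partial}^{*}N_{q}$, since \eqref{ann4} is an estimate for $(0,q)$-forms while $\overline{\partial}^{*}N_{q}\alpha$ has degree $q-1$.
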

We comment on the assumptions. $({P}_{n-1-q})$ arises as follows. When establishing compactness of $N_{q}$ assuming a condition  like $({P})$ (on a pseudoconvex domain), one uses the Kohn-Morrey-H\"{o}rmander formula, or a twisted version of it (\cite{McN02, Str06}). In the case of an `annulus' between two pseudoconvex domains, the part of the boundary integral from the inner boundary has the wrong sign (it is nonpositive instead of nonnegative), and a modification is needed. From the work in \cite{Shaw85a}, when applied to our situation, it turns out that the condition needed on the Hessian of a suitable function is precisely that the sum of any $(n-1-q)$ eigenvalues be at least $M$. The details are as follows. 

In order to state Shaw's result, we work temporarily in a special boundary chart. Let $L_1, \dots, L_{n-1}$ be a (local) orthonormal basis of $T^{(1,0)}(b\Omega)$, $L_{n}$ the complex (unit) normal, and  $\omega_j$ the $(1,0)$-form dual to $L_j$. For a function $f$, let
$f_{jk}$ be defined by $\p\dbar f = \sum_{j,k=1}^n f_{jk}\, \omega_j\wedge\overline{\omega_k}$. Let $\varphi \in C^{\infty}(\overline{\Omega^{+}})$, real valued, and let $u = \sum_{J \in \mathcal{I}_{q}}u_{J}\overline{\omega_{J}} \in C^{\infty}_{(0,q)}(\overline{\Omega^{+}}) \cap \text{dom}(\overline{\partial}_{\varphi}^{*})$, supported in a special boundary chart for the \emph{inner} boundary. Here, $\overline{\partial}_{\varphi}^{*}$ is the adjoint of $\overline{\partial}$ with respect to the weighted $L^{2}$ inner product with weight $e^{-\varphi}$. The computations that lead to $(3.23)$ in \cite{Shaw85a} are valid for general weight functions, and there is the following analogue of $(3.23)$ (this is made explicit in \cite{Ah07}, Proposition 2.1):
\begin{multline}\label{ann1}
\sum_{K\in\I_{q-1}}\sum_{j,k=1}^{n} \int_{\Omega^+} \vp_{jk} u_{jK}\overline{u_{kK}}
e^{-\vp}\, dV 
- \sum_{J\in\I_{q}}\int_{\Omega^+}\Big(\sum_{j=1}^{n-1} \vp_{jj}\Big)|u_J|^2 e^{-\vp}\, dV \\
+ \sum_{K\in\I_{q-1}}\sum_{j,k=1}^{n} \int_{b\Omega} \rho_{jk} u_{jK}\overline{u_{kK}}
e^{-\vp}\, d\sigma 
- \sum_{J\in\I_{q}}\int_{b\Omega}\Big(\sum_{j=1}^{n-1} \rho_{jj}\Big)|u_J|^2 e^{-\vp}\, d\sigma \\
+\sum_{J \in \I{q}}\left (\|\L_n u_J\|_\vp^2 + \sum_{j=1}^{n-1}\|\delta_j u_J\|_\vp^2 \right )\\
\leq C\big(\|\dbar u\|_\vp^2 + \|\dbars_\vp u\|_\vp^2 + \|u\|_\vp^2\big)\; ,\;\;\;\;\;\;\;\;\;\;\;\;\;\;\;\;\;\;\;\;\;\;\;\;\;\;\;\;\;\;\; \; 
\end{multline}
where $\delta_{j}=e^{\varphi}L_{j}e^{-\varphi}$, $\rho$ is a defining function for $\Omega^{+}$, and $C$ is independent of $\varphi$. Denote by $\mathcal{I}_{q}^{\prime}$ the set of strictly increasing $q$-tuples that do not contain $n$. In the second line of \eqref{ann1}, the sums are effectively only over $1 \leq k,j \leq (n-1)$, $K \in \mathcal{I}^{\prime}_{q-1}$, and $J \in \mathcal{J}^{\prime}_{q}$, respectively ($u_{nK}=0$ on $b\Omega$; $u \in \text{dom}(\overline{\partial}^{*})$). The integrand in this line (without the weight factor) is therefore
\begin{equation}\label{ann1a}
\sum_{K\in\I_{q-1}^{\prime}}\sum_{j,k=1}^{n-1} \rho_{jk} u_{jK}\overline{u_{kK}}
- \sum_{J\in\I_{q}^{\prime}}\Big(\sum_{j=1}^{n-1}\rho_{jj}\Big) |u_J|^{2}
= \sum_{K\in\I_{q-1}^{\prime}}\sum_{j,k=1}^{n-1} \left (\rho_{jk} 
- \frac 1q \Big(\sum_{\ell=1}^{n-1} \rho_{\ell\ell}\Big)\delta_{jk}\right )
u_{jK}\overline{u_{kK}} \; .
\end{equation}
This is because every $|u_{J}|^{2}$ can be written in precisely $q$ ways as $|u_{jK}|^{2}$. Note that the Hessian of $\rho$ is negative semidefinite on the complex tangent space at points of $b\Omega \subset b\Omega^{+}$. As a result, the second line in \eqref{ann1} is nonnegative: the right hand side equals at least $|u|^{2}$ times the sum of the smallest $q$ eigenvalues of the Hermitian matrix
\[ \left ( \rho_{jk}- \frac{1}{q}\left (\sum_{j=1}^{n-1}\rho_{jj} \right )\delta_{jk} \right)_{j,k=1}^{n-1} \; \] 
(this is analogous to the discussion of property $(P_{q})$ in section 1). Such a sum equals minus the trace $\sum_{j=1}^{n-1} \rho_{jj}$ plus a sum of $q$ eigenvalues of $(\rho_{jk})_{j,k=1}^{n-1}$, hence is at least equal to the negative of the sum of the largest $(n-1-q)$ eigenvalues of $((\rho)_{jk})_{j,k=1}^{n-1}$, and so is nonnegative. Letting the sums in the first line of \eqref{ann1} run only over $K \in \mathcal{I}_{q-1}^{\prime}$, $J \in \mathcal{I}_{q}^{\prime}$, and $1 \leq j.k \leq n-1$, respectively, makes a mistake that involves (coefficients of) the normal component of $u$. These terms can be estimated by the right hand side of \eqref{ann1} plus $C_{\varphi}\|e^{-\varphi /2}u\|^{2}_{W_{-1}(\Omega^{+})}$. This follows from an argument similar to that in \eqref{u-normal}, starting with the $W^{1}$ to $W^{-1}$ version of the first inequality in \eqref{u-normal}, and using interpolation of Sobolev norms to get rid of the dependence on $\varphi$ of the (first) constant. Therefore, estimate \eqref{ann1} remains valid when the sums in the first line are restricted so that no normal components of $u$ appear, and the right hand side is augmented by $C_{\varphi}\|e^{-\varphi /2}u\|^{2}_{W_{-1}(\Omega^{+})}$. Observe that as in \eqref{ann1a}, the integrand (without the weight factor $e^{-\varphi}$) in the first line in \eqref{ann1} is then
\begin{equation}\label{ann2}
\sum_{K\in\I_{q-1}^{\prime}}\sum_{j,k=1}^{n-1} \vp_{jk} u_{jK}\overline{u_{kK}}
- \sum_{J\in\I_{q}^{\prime}}\Big(\sum_{j=1}^{n-1}\vp_{jj}\Big) |u_J|^{2}
= \sum_{K\in\I_{q-1}^{\prime}}\sum_{j,k=1}^{n-1} \left (\vp_{jk} 
- \frac 1q \Big(\sum_{\ell=1}^{n-1} \vp_{\ell\ell}\Big)\delta_{jk}\right )
u_{jK}\overline{u_{kK}} \; ,
\end{equation}
where $\delta_{jk}$ denotes the Kronecker $\delta$. Omitting the nonnegative second and third lines from \eqref{ann1} (in its modified form), we obtain for $u$ supported in a special boundary chart:
\begin{multline}\label{ann3}
\sum_{K\in\I_{q-1}^{\prime}}\sum_{j,k=1}^{n-1}\int_{\Omega^{+}} \left (\varphi_{jk} 
- \frac 1q \Big(\sum_{\ell=1}^{n-1} \vp_{\ell\ell}\Big)\delta_{jk}\right )
u_{jK}\overline{u_{kK}}\,e^{-\varphi}dV \\
\leq C\big(\|\dbar u\|_\vp^2 + \|\dbars_\vp u\|_\vp^2 + \|u\|_\vp^2\big)
+ C_{\varphi}\|e^{-\vp/2} u\|^{2}_{W^{-1}(\Omega)}\; .
\end{multline}
We are now ready to prove Proposition \ref{cor:Nq is cpt, all weights}.

\begin{proof}[Proof of Proposition \ref{cor:Nq is cpt, all weights}] 
Fix $M$ (sufficiently large) and choose a function $\lambda_{M} \in C^{\infty}(\overline{\Omega^{+}})$, $0 \leq \lambda_{M} \leq 1$, that agrees near the inner boundary (say, in $U_{1} \cap \Omega^{+}$) with $-\mu_{M}$, where $\mu_{M}$ is a function given by the definition of $(P_{n-1-q})$, and near the outer boundary (say, on $U_{2} \cap \Omega^{+}$) with a function given by the definition of $(P_{q})$. We claim that we have the following estimate (for $M \geq M_{0}$):
\begin{equation}\label{ann4}
\|u\|_{\lambda_{M}}^{2} \leq \frac{C}{M}(\|\overline{\partial}u\|_{\lambda_{M}}^{2} + \|\overline{\partial}_{\lambda_{M}}^{*}u\|_{\lambda_{M}}^{2}) + C_{M}\|u\|_{W^{-1}(\Omega^{+})}^{2} \; , u \in \text{dom}(\overline{\partial}) \cap \text{dom}(\overline{\partial}^{*}) \; ,
\end{equation}
with a constant $C$ that does not depend on $M$. Note that saying that $u \in \text{dom}(\overline{\partial}^{*})$ is the same as saying that $u \in \text{dom}(\overline{\partial}_{\lambda_{M}}^{*})$. Also, the unweighted norms and the weighted norms are equivalent, with bounds that are uniform in $M$ (because $0 \leq \lambda_{M} \leq 1$).

It suffices to establish \eqref{ann4} for forms that are smooth up to the boundary; the density of these forms in $\text{dom}(\overline{\partial}) \cap \text{dom}(\overline{\partial}^{*})$ does not require pseudoconvexity (\cite{Hor65}, Proposition 2.1.1; \cite{ChSh01}, Lemma 4.3.2). 

Assume first that $u$ is supported near the outer boundary, on $U_{2} \cap \overline{\Omega^{+}}$. Then \eqref{ann4} follows immediately from the Kohn-Morrey-H\"{o}rmander formula (\cite{Hor65}, Proposition 2.1.2; \cite{ChSh01}, Proposition 4.3.1) and from $(2)$ in the definition of $(P_{q})$. 

Now assume that $u$ is similarly supported near the inner boundary, on $U_{1} \cap \overline{\Omega^{+}}$. Via a partition of unity, we may assume that $u$ is supported in a special boundary chart. We use \eqref{ann3} with $\varphi = \lambda_{M}$. Note that $\lambda_{M}=-\mu_{M}$ near the support of $u$, where $\mu_{M}$ satisfies $(1)$ and $(2)$ in the definition of $(P_{n-1-q})$. At a point, the integrand on the left hand side of \eqref{ann3} (without the exponential factor) is at least as big as $|u|^{2}$ times the sum of the smallest $q$ eigenvalues of the Hermitian matrix 
\begin{equation}\label{ann5}
\left ((-\mu_{M})_{jk} 
+ \frac 1q \Big(\sum_{\ell=1}^{n-1} (\mu_{M})_{\ell\ell}\Big)\delta_{jk}\right )_{j,k=1}^{n-1} \; 
\end{equation}
(see again the discussion of property $({P}_{q})$ in section 1). Such a sum equals the trace $\sum_{\ell=1}^{n-1} (\mu_{M})_{\ell\ell}$ minus a sum of $q$ eigenvalues of $((\mu_{M})_{jk})_{j,k=1}^{n-1}$, hence is at least equal to the sum of the smallest $(n-1-q)$ eigenvalues of $((\mu_{M})_{jk})_{j,k=1}^{n-1}$, which in turn is at least equal to the sum of the smallest $(n-1-q)$ eigenvalues of $((\mu_{M})_{jk})_{j,k=1}^{n}$ (by the equivalence of $(2)$ and (b), or directly by the Schur majorization theorem (\cite{HJ85}, Theorem 4.3.26). That is, the sum is at least equal to the sum of the smallest $(n-1-q)$ eigenvalues of $(\partial^{2}\mu_{M} / \partial z_{j}\partial\overline{z_{k}})_{j,k=1}^{n}$, so is at least equal to $M$. This gives \eqref{ann4} (after absorbing the term $C\|u\|^{2}_{\lambda_{M}}$ and rescaling $M$), but with $u$ on the left hand side replaced by the tangential part of $u$. Again, the normal component is under control, and as in \eqref{ann1}, the square of its norm is estimated by the right hand side of \eqref{ann4}. This proves estimate \eqref{ann4} when $u$ is supported near the inner boundary.

When $u$ has compact support in $\Omega^{+}$, 
\eqref{ann4} follows by interior elliptic regularity of $\overline{\partial}\oplus\overline{\partial}^{*}_{\lambda_{M}}$, with a constant $C$ that is independent of the support ($C_{M}$ depends on the support). The reason that $C$ may be taken to be independent of the support is that $\|\overline{\partial}u\| + \|\overline{\partial}_{\lambda_{M}}^{*}u\|$ controls the $W^{1}$-norm on a relatively compact subset. Since we only need to bound the $L^{2}$-norm, we can interpolate between the $W^{1}$-norm and the $W^{-1}$-norm to get the desired estimate.

Finally, when $u$ is general, choose a partition of unity on $\overline{\Omega^{+}}$, $\chi_{0}, \chi_{1}$, and $\chi_{2}$, such that $\chi_{0}$ is compactly supported in $\Omega^{+}$, and $\chi_{1}$ and $\chi_{2}$ are supported in $U_{1}$ and $U_{2}$, respectively. We then have \eqref{ann4} for $\chi_{0}u$, $\chi_{1}u$, and $\chi_{2}u$. The right hand sides of these estimates contain terms where $\overline{\partial}$ or $\overline{\partial}^{*}_{\lambda_{M}}$ produce derivatives of the cutoff functions. However, these terms contain no derivatives of $u$, and they are compactly supported. Consequently, they can be estimated as in the previous paragraph. Collecting the resulting estimates establishes \eqref{ann4}, with $C$ independent of $M$.

\eqref{ann4} implies that from every sequence $\{u_{k}\}_{k=1}^{\infty}$ in $\text{dom}(\overline{\partial}) \cap \text{dom}(\overline{\partial}_{\lambda_{M}}^{*})$ (which equals $\text{dom}(\overline{\partial}) \cap \text{dom}(\overline{\partial}^{*})$) with $\|u_{n}\|_{\lambda_{M}}$ bounded and $\overline{\partial}u_{k} \rightarrow 0$, $\overline{\partial}_{\lambda_{M}}^{*}u_{k} \rightarrow 0$, one can extract a subsequence which converges in (weighted) $L^{2}_{(0,q)}(\Omega^{+})$. It suffices to find a subsequence which converges in $W^{-1}_{(0,q)}(\Omega^{+})$ (using that $L^{2}_{(0,q)}(\Omega^{+}) \hookrightarrow W^{-1}_{(0,q)}(\Omega^{+})$ is compact); \eqref{ann4} implies that such a subsequence is Cauchy (hence convergent) in $L^{2}_{(0,q)}(\Omega^{+})$. General Hilbert space theory (see \cite{Hor65}, Theorems 1.1.3 and 1.1.2) now gives that $\text{ker}(\Box_{\lambda_{M},q})$ is finite dimensional and that $\overline{\partial}: L^{2}_{(0,q)}(\Omega^{+}) \rightarrow  L^{2}_{(0,q+1)}(\Omega^{+})$ and $\overline{\partial}_{\lambda_{M}}^{*}: L^{2}_{(0,q)}(\Omega^{+}) \rightarrow  L^{2}_{(0,q-1)}(\Omega^{+})$ have closed range. But then $\overline{\partial}: L^{2}_{(0,q-1)}(\Omega^{+}) \rightarrow L^{2}_{(0,q)}(\Omega^{+})$ also has closed range (its adjoint in the weighted space has closed range), and consequently, so does $\overline{\partial}^{*}$ (acting on $(0,q)$-forms; this also follows from the formula $\overline{\partial}^{*}v=e^{-\lambda_{M}}\overline{\partial}_{\lambda_{M}}^{*}(e^{\lambda_{M}}v)$). Therefore, we have the estimate
\begin{equation}\label{ann6}
\|u\|_{L^{2}(\Omega^{+})} \lesssim \|\overline{\partial}u\|_{L^{2}(\Omega^{+})} + \|\overline{\partial}^{*}u\|_{L^{2}(\Omega^{+})} +\|H_{q}u\|_{L^{2}(\Omega^{+})} \;
\end{equation}
for $u \in \text{dom}(\overline{\partial}) \cap \text{dom}(\overline{\partial}^{*})$. This estimate implies the existence of $N_{q}$ as a bounded operator on $L^{2}_{(0,q)}(\Omega^{+})$ that inverts $\Box_{q}$ on $\mathcal{H}_{q}^{\perp}$ (see for example \cite{Shaw85a}, Lemma 3.2 and its proof). Moreover, the range of $\overline{\partial}: L^{2}_{(0,q-1)}(\Omega^{+}) \rightarrow L^{2}_{(0,q)}(\Omega^{+})$ has finite codimension in $\text{ker}(\overline{\partial}) \subset L^{2}_{(0,q)}(\Omega^{+})$, because $\text{ker}(\Box_{\lambda_{M},q})$ is finite dimensional). But the (unweighted) orthogonal complement of this range in $\text{ker}(\overline{\partial}) \subset L^{2}_{(0,q)}(\Omega^{+})$ equals $\text{ker}(\Box_{q})$, which is therefore finite dimensional as well. 

To see that $N_{q}$ is compact, it suffices to show compactness on $\mathcal{H}_{q}^{\perp}$ (since $N_{q}$ is zero on $\mathcal{H}_{q}$). When $u \in \mathcal{H}_{q}^{\perp}$, we have from \eqref{ann6} (since $N_{q}u \in \mathcal{H}_{q}^{\perp}$)
\begin{equation}\label{ann7}
\|N_{q}u\|_{L^{2}(\Omega^{+})} \lesssim \|\overline{\partial}N_{q}u\|_{L^{2}(\Omega^{+})} + \|\overline{\partial}^{*}N_{q}u\|_{L^{2}(\Omega^{+})} = \|(\overline{\partial}^{*}N_{q+1})^{*}u\|_{L^{2}(\Omega^{+})} + \|\overline{\partial}^{*}N_{q}u\|_{L^{2}(\Omega^{+})} \; .
\end{equation}
Therefore, we only need to show that both $\overline{\partial}^{*}N_{q}$ and  $\overline{\partial}^{*}N_{q+1}$ are compact. Now $\overline{\partial}^{*}N_{q+1}\alpha$ gives the norm minimizing solution to $\overline{\partial}v=\alpha$ when $\alpha \in \text{Im}(\overline{\partial}) \subset L^{2}_{(0,q+1)}(\Omega^{+})$, while $\overline{\partial}_{\lambda_{M}}^{*}N_{\lambda_{M},q+1}\alpha$ gives a different solution (the one that minimizes the weighted norm). For such $\alpha$, \eqref{ann4} therefore implies (with constants independent of $M$)
\begin{multline}\label{ann8}
\|\overline{\partial}^{*}N_{q+1}\alpha\|_{L^{2}(\Omega^{+})}^{2} \leq \|\overline{\partial}_{\lambda_{M}}^{*}N_{\lambda_{M},q+1}\alpha\|_{L^{2}(\Omega^{+})}^{2} \lesssim 
\|\overline{\partial}_{\lambda_{M}}^{*}N_{\lambda_{M},q+1}\alpha\|_{\lambda_{M}}^{2} \\
\lesssim \frac{C}{M}\|\alpha\|_{\lambda_{M}}^{2}+ C_{M}\|\overline{\partial}_{\lambda_{M}}^{*}N_{\lambda_{M},q+1}\alpha\|_{W^{-1}(\Omega^{+})}^{2} \\
\lesssim \frac{C}{M}\|\alpha\|_{L^{2}(\Omega^{+})}^{2}+ C_{M}\|\overline{\partial}_{\lambda_{M}}^{*}N_{\lambda_{M},q+1}\alpha\|_{W^{-1}(\Omega^{+})}^{2} \; .
\end{multline}
Because $C$ is independent of $M$ and $\overline{\partial}_{\lambda_{M}}^{*}N_{\lambda_{M},q+1}: L^{2}_{(0,q+1)}(\Omega^{+}) \rightarrow W^{-1}_{(0,q)}(\Omega^{+})$ is compact ($L^{2}(\Omega^{+})$ imbed compactly into $W^{-1}(\Omega^{+})$), \eqref{ann8} implies that $\overline{\partial}^{*}N_{q+1}$ is compact on $\text{Im}(\overline{\partial})$ (\cite{McN02}, Lemma 2.1, \cite{DAngelo02}, Proposition V.2.3). But on the orthogonal complement of $\text{Im}(\overline{\partial})$, $\overline{\partial}^{*}N_{q+1} =0$, and so $\overline{\partial}^{*}N_{q+1}$ is compact from $L^{2}_{(0,q+1)}(\Omega^{+}) \rightarrow L^{2}_{(0,q)}(\Omega^{+})$. To estimate $\overline{\partial}^{*}N_{q}\alpha$, we cannot invoke \eqref{ann4} directly (because $\overline{\partial}^{*}N_{q}\alpha$ is a $(q-1)$-form), and an additional step is needed. We have (again for $\alpha \in \text{Im}(\overline{\partial}) \subset L^{2}_{(0,q)}(\Omega^{+})$)
\begin{multline}\label{ann9}
\|\overline{\partial}_{\lambda_{M}}^{*}N_{\lambda_{M},q}\alpha\|_{\lambda_{M}}^{2} 
= (\overline{\partial}\overline{\partial}_{\lambda_{M}}^{*}N_{\lambda_{M},q}\alpha, N_{\lambda_{M},q}\alpha)_{\lambda_{M}} =(\alpha, N_{\lambda_{M},q}\alpha)_{\lambda_{M}} \\
\leq \frac{2C}{M}\|\alpha\|_{\lambda_{M}}^{2} + \frac{M}{2C}\|N_{\lambda_{M},q}\alpha\|_{\lambda_{M}}^{2} \\
\leq \frac{2C}{M}\|\alpha\|_{\lambda_{M}}^{2} + \frac{1}{2}\|\overline{\partial}_{\lambda_{M}}^{*}N_{\lambda_{M},q}\alpha\|_{\lambda_{M}}^{2}
+ C_{M}\|N_{\lambda_{M}}\alpha\|_{W^{-1}(\Omega^{+})}^{2} \; .
\end{multline}
Here we have used that $\overline{\partial}\alpha = 0$ and that $\alpha \perp_{\lambda_{M}} \mathcal{H}_{\lambda_{M},q}$ (since $\alpha \in \text{Im}(\overline{\partial})$) in the equality in the second line, the inequality $|ab| \leq (1/A)a^{2}+Ab^{2}$, and \eqref{ann4} for the last estimate. The middle term in the last line can now be absorbed, and combining the resulting estimate with $\|\overline{\partial}^{*}N_{q}\alpha\|_{L^{2}(\Omega^{+})}^{2} \leq \|\overline{\partial}_{\lambda_{M}}^{*}N_{\lambda_{M},q}\alpha\|_{L^{2}(\Omega^{+})}^{2}$ gives an analogue of \eqref{ann8}. The rest of the argument is the same as above. This completes the proof of Proposition \ref{cor:Nq is cpt, all weights}.
\end{proof}

%
%
%
%
\section{Property $(P)$ and compactness of the complex Green operator}\label{finalproof}
 
In this section, we prove Theorem \ref{thm:Pq implies cptness on bdy}. We may assume that $1\leq q \leq n-1-q$, and we must show that $G_{q}$ is compact (by the symmetry between form levels discussed in section 1, $G_{n-1-q}$ is then compact as well). For $u \in L^{2}_{(0,q)}(b\Omega)$, we have the Hodge decomposition $u=\overline{\partial}_{b}\overline{\partial}_{b}^{*}G_{q}u +  \overline{\partial}_{b}^{*}\overline{\partial}_{b}G_{q}u$ (\cite{ChSh01}, Theorem 9.4.2). In particular, when $\overline{\partial}_{b}u = 0$, $\overline{\partial}_{b}^{*}G_{q}u$ gives the solution of minimal $L^{2}$-norm (the canonical solution) to the equation $\overline{\partial}_{b}\alpha = u$. $G_{q}$ can be expressed in terms of these canonical solution operators at levels $q$ and $q+1$ and their adjoints (\cite{BoSt91}, p. 1577):
\begin{equation}\label{proof1}
G_{q} = (\overline{\partial}_{b}^{*}G_{q})^{*}(\overline{\partial}_{b}^{*}G_{q}) + 
(\overline{\partial}_{b}^{*}G_{q+1})(\overline{\partial}_{b}^{*}G_{q+1})^{*} \; .
\end{equation}
This formula (including the proof) is analogous to the corresponding formula for the $\overline{\partial}$-Neumann operator (\cite{FoKo72, Ran84}). Therefore, compactness of $G_{q}$ is equivalent to compactness of both $\overline{\partial}_{b}^{*}G_{q}$ and $\overline{\partial}_{b}^{*}G_{q+1}$, and we shall prove the latter. Since projection onto the orthogonal complement of $\text{ker}(\overline{\partial}_{b})$ preserves compactness, we only have to produce \emph{some} solution with suitable estimates.  

We first consider $\overline{\partial}_{b}^{*}G_{q}$. Choose a ball $B$ so that $\overline{\Omega} \subset\subset B$. Set $\Omega^{+} = B\setminus \overline{\Omega}$. Let $\alpha \in \text{ker}(\overline{\partial}_{b}) \cap C^{\infty}_{(0,q)}(b\Omega)$. By \cite{ChSh01}, Lemma 9.3.5., there exist \emph{$\overline{\partial}$-closed forms} $\alpha^{+} \in C^{1}_{(0,q)}(\overline{\Omega^{+}}) \subset W^{1}_{(0,q)}(\Omega^{+})$ and $\alpha^{-} \in C^{1}_{(0,q)}(\overline{\Omega}) \subset W^{1}_{(0,q)}(\Omega)$ such that
\begin{equation}\label{jump}
\alpha = \alpha^{+} - \alpha^{-} \;\; \text{on} \;\, b\Omega \; 
\end{equation}
(in the sense of traces of the coefficients, but also in the sense of restrictions of forms; i.e. the normal components of $\alpha^{+}$ and $\alpha^{-}$ cancel each other out at points of $b\Omega$). Moreover
\begin{equation}\label{jump1}
\|\alpha^{+}\|_{W^{1/2}(\Omega^{+})} \lesssim \|\alpha\|_{L^{2}(b\Omega)} 
\end{equation}
and
\begin{equation}\label{jump2}
\|\alpha^{-}\|_{W^{1/2}(\Omega)} \lesssim \|\alpha\|_{L^{2}(b\Omega)} \; .
\end{equation}
Estimates \eqref{jump1} and \eqref{jump2} are from \cite{ChSh01}, Lemma 9.3.6. However, a comment is in order, as only \eqref{jump2} is explicit there, while the estimate for $\alpha^{+}$ is given in terms of a $\overline{\partial}$-closed continuation of $\alpha^{+}$ to all of $B$. On $B$, only the $(-1/2)$-norm of this continuation can be estimated. But this loss occurs across $b\Omega$, and so does not affect the estimate on $\Omega^{+}$. \eqref{jump1} is implicit in \cite{ChSh01}; we sketch the argument (which is the same as for $\alpha^{-}$). 

$\alpha^{+}$ is of the form $\alpha^{+} = (1/2)(E_{k}\alpha - V_{k})$, where the notation is the same as in \cite{ChSh01}. $E_{k}\alpha$ is an extension of $\alpha$, based on an extension operator $E$ for functions (i.e. the coefficients), modified so that $\overline{\partial}E_{k}\alpha$ vanishes to order $k$ on $b\Omega$ (\cite{ChSh01}, (9.3.12a) and (9.3.12b)). From the definition of $E$ (\cite{ChSh01}, (9.3.8)), it is easily checked that $\|E_{k}\alpha\|_{W^{1/2}(\Omega^{+})} \lesssim \|\alpha\|_{L^{2}(b\Omega)}$. $V_{k}$ is obtained as the solution of a $\overline{\partial}$ problem on $B$. It has the form $V_{k} = \overline{\partial}^{*}N_{q+1}^{B}\tilde{U}_{k}$, where $\tilde{U}_{k}$ is a $(q+1)$-form supported in the intersection $\Omega^{+}_{\delta}$ of a thin tubular neighborhood $\Omega_{\delta}$ of $b\Omega$ with $\Omega^{+}$ ($\Omega_{\delta}$ is thin enough so that the usual tangential Sobolev norms make sense). $\tilde{U}_{k}$ satisfies the estimate $\|\tilde{U}_{k}\|_{W^{-1/2}(B)} \lesssim  |||\tilde{U}_{k}|||_{W^{-1/2}(\Omega_{\delta})} \lesssim \|\alpha\|_{L^{2}(b\Omega)}$; the first inequality is dual to $|||u|||_{W^{1/2}(\Omega_{\delta})} \lesssim \|u\|_{W^{1/2}(\Omega_{\delta})}$, the second is (9.3.15) in \cite{ChSh01}. Because $\overline{\partial}^{*}N_{q+1}^{B}$ locally gains a full derivative, we obtain that $V_{k}$ is in $W^{1/2}_{(0,q)}$ away from the outer boundary of $\Omega^{+}$ (i.e. the boundary of $B$). Near the outer boundary, $(1/2)$-estimates follow from the pseudo-local estimates for $\overline{\partial}^{*}N_{q+1}^{B}$ near the boundary of $B$ (taking $\|\tilde{U}_{k}\|_{W^{-1/2}(B)}$ as the weak global term). Putting these estimates together gives that $\|V_{k}\|_{W^{1/2}(\Omega^{+})} \lesssim \|\alpha\|_{L^{2}(b\Omega)}$, and \eqref{jump1} follows.

$\Omega^{+}$ is not pseudoconvex, so $\overline{\partial}\alpha^{+}=0$ does not automatically imply that $\alpha^{+}$ is in the range of $\overline{\partial}$. That it is follows from \cite{ChSh01}: $\alpha^{+}$ has a $\overline{\partial}$-closed extension to $B$, which is in the range of $\overline{\partial}$ on $B$. By restriction, $\alpha^{+}$ is in the range of $\overline{\partial}$ on $\Omega^{+}$. Therefore, we have on $\Omega^{+}$:
\begin{equation}\label{jump3}
\alpha^{+} = \overline{\partial}\overline{\partial}^{*}N^{\Omega^{+}}_{q}\!\!\alpha^{+} \; .
\end{equation}
Similarly, on $\Omega$,
\begin{equation}\label{jump4}
\alpha^{-} = \overline{\partial}\overline{\partial}^{*}N^{\Omega}_{q}\alpha^{-} \; .
\end{equation}
Note that both $\beta^{+} :=\overline{\partial}^{*}N^{\Omega^{+}}_{q}\alpha^{+}$ and $\beta^{-} :=\overline{\partial}^{*}N^{\Omega}_{q}\alpha^{-}$ have vanishing normal component on $b\Omega$ (as elements of $\text{dom}(\overline{\partial}^{*})$). We also use here that compactness of $N^{\Omega^{+}}_{q}$ (from Proposition \ref{cor:Nq is cpt, all weights}) and of $N^{\Omega}_{q}$ (because $b\Omega$ satisfies $(P_{q})$) lift to higher Sobolev norms (see \cite{KoNi65}, Theorems 2 and 2$^{\prime}$, in particular the remark at the end of the proof of Theorem 2 (page 466)). In particular, $\beta^{+}$ and $\beta^{-}$ are in $W^{1}$ of the respective domains (since $\alpha^{+} \in C^{1}(\overline{\Omega^{+}})$, $\alpha^{-} \in C^{1}(\overline{\Omega})$), and so have traces on $b\Omega$. We obtain that on $b\Omega$
\begin{equation}\label{jump5}
\alpha = \overline{\partial}_{b}\beta^{+} - \overline{\partial}_{b}\beta^{-} = \overline{\partial}_{b}(\beta^{+} - \beta^{-}) \; ,
\end{equation}
where we also use $\beta^{+}$ and $\beta^{-}$ to denote the traces on the boundary. The elliptic theory for the (real) Laplacian ( see for example \cite{LiMa72}) gives
\begin{multline}\label{jump6}
\|\beta^{+}\|_{L^{2}(b\Omega)} \leq \|\beta^{+}\|_{L^{2}(b\Omega^{+})} \lesssim \|\beta^{+}\|_{W^{1/2}(\Omega^{+})} + \|\Delta\beta^{+}\|_{W^{-1}(\Omega^{+})} \\
= \|\overline{\partial}^{*}N^{\Omega^{+}}_{q}\alpha^{+}\|_{W^{1/2}(\Omega^{+})} + \|\Delta\overline{\partial}^{*}N^{\Omega^{+}}_{q}\alpha^{+}\|_{W^{-1}(\Omega^{+})} \\
\lesssim \|\overline{\partial}^{*}N^{\Omega^{+}}_{q}\alpha^{+}\|_{W^{1/2}(\Omega^{+})}
+ \|\alpha^{+}\|_{L^{2}(\Omega^{+})} \; .
\end{multline}
Here $\Delta$ acts coefficientwise on forms. In the last estimate, we have used that $\Box_{q}$ also acts as $\Delta$ coefficientwise on forms (up to a constant factor). The corresponding estimate for $\beta^{-}$ is 
\begin{equation}\label{jump7}
\|\beta^{-}\|_{L^{2}(b\Omega)} \lesssim \|\overline{\partial}^{*}N^{\Omega}_{q}\alpha^{-}\|_{W^{1/2}(\Omega)}
+ \|\alpha^{-}\|_{L^{2}(\Omega)} \; .
\end{equation}
Combining \eqref{jump5} with estimates \eqref{jump6} and \eqref{jump7}, we obtain for the canonical solution operator $\overline{\partial}_{b}^{*}G_{q}$:
\begin{equation}\label{jump8}
\|\overline{\partial}_{b}^{*}G_{q}\alpha\|_{L^{2}(b\Omega)} \lesssim \|\overline{\partial}^{*}N^{\Omega^{+}}_{q}\alpha^{+}\|_{W^{1/2}(\Omega^{+})}
+ \|\alpha^{+}\|_{L^{2}(\Omega^{+})} + \|\overline{\partial}^{*}N^{\Omega}_{q}\alpha^{-}\|_{W^{1/2}(\Omega)}
+ \|\alpha^{-}\|_{L^{2}(\Omega)} \; .
\end{equation}
Both $N^{\Omega^{+}}_{q}$ and $N^{\Omega}_{q}$ are compact ($\Omega^{+}$ satisfies the assumptions in Proposition \ref{cor:Nq is cpt, all weights}). 
Therefore, $\overline{\partial}^{*}N^{\Omega^{+}}_{q}$ and $\overline{\partial}^{*}N^{\Omega}_{q}$ are compact in $W^{1/2}_{(0,q)}(\Omega^{+})$ and $W^{1/2}_{(0,q)}(\Omega)$, respectively (again from \cite{KoNi65}).The embeddings $W^{1/2}(\Omega^{+}) \hookrightarrow L^{2}(\Omega^{+})$ and $W^{1/2}(\Omega) \hookrightarrow L^{2}(\Omega)$ are also compact. \eqref{jump8} was derived for $\alpha \in C^{\infty}_{(0,q)}(b\Omega)$. But $C^{\infty}_{(0,q)}(b\Omega) \cap \text{ker}(\overline{\partial}_{b})$ is dense in $\text{ker}(\overline{\partial}_{b})$ (\cite{ChSh01}, Lemma 9.3.8). In view of \eqref{jump1} and \eqref{jump2}, \eqref{jump8} therefore implies that $\overline{\partial}_{b}^{*}G_{q}$ maps bounded sets in $\text{ker}(\overline{\partial}_{b}) \subset L^{2}_{(0,q)}(b\Omega)$ into relatively compact sets in $L^{2}_{(0,q-1)}(b\Omega)$. In other words, $\overline{\partial}_{b}^{*}G_{q}$ is compact on $\text{ker}(\overline{\partial}_{b})$, hence on $L^{2}_{(0,q)}(b\Omega)$.

We now consider $\overline{\partial}_{b}^{*}G_{q+1}$. $b\Omega$ also satisfies $(P_{q+1})$ (because $(P_{q} \Rightarrow (P_{q+1})$). Assume first that $2q \leq n-2$. Then $b\Omega$ also satisfies $(P_{n-1-(q+1)}) = (P_{n-2-q})$ (since $q \leq (n-2-q)$). Consequently, the previous case applies (with $q$ replaced by $(q+1)$), and $\overline{\partial}_{b}^{*}G_{q+1}$ is compact. Since we may assume without loss of generality that $q \leq (n-1-q)$, i.e. $2q \leq (n-1)$, in proving Theorem \ref{thm:Pq implies cptness on bdy}, the only case left to consider is $2q = (n-1)$. We argue as follows. $(\overline{\partial}_{b}^{*}G_{q})^{*}$, the canonical solution operator to $\overline{\partial}_{b}^{*}$ (as an operator from $(0,q)$-forms to $(0,q-1)$-forms), is compact because $\overline{\partial}_{b}^{*}G_{q}$ is. Because $q-1 = n-1-(q+1)$, the symmetry discussed in section \ref{appendix} yields a compact solution operator for $\overline{\partial}_{b}$ (as an operator from $(0,q)$-forms to $(0,q+1)$-forms). Therefore, the canonical solution operator $\overline{\partial}_{b}^{*}G_{q+1}$ is compact.

The proof of Theorem \ref{thm:Pq implies cptness on bdy} is complete.

\section{Appendix}\label{appendix}

In the last step above, we need a version of Koenig's tilde operators that intertwines $\overline{\partial}_{b}$ and $\overline{\partial}_{b}^{*}$ without the $0$-th order error term that occurs in \cite{Koe04}. A reference for precisely the statement we need seems to be hard to pinpoint in the literature, and we give a brief discussion of a suitable construction. No originality is claimed. Let $\Omega$ be a smooth bounded pseudoconvex domain in $\mathbb{C}^{n}$. For $0 \leq q \leq (n-1)$, define $T_{q}$: $\mathcal{L}^{2}_{(0,q)}(b\Omega)
\rightarrow \mathcal{L}^{2}_{(0,n-1-q)}(b\Omega)$ via
\begin{equation}\label{app1}
\int_{b\Omega}\beta\wedge\alpha\wedge dz_{1}\wedge \cdots \wedge
dz_{n} = (\beta, T_{q}\alpha)_{\mathcal{L}^{2}_{(0,n-1-q)}(b\Omega)}
\; ,
\end{equation}
for all $\beta \in \mathcal{L}^{2}_{(0,n-1-q)}(b\Omega)$. $T_{q}$ is conjugate linear and continuous. 

For $q \leq (n-2)$, $\alpha \in C^{\infty}_{(0,q)}(b\Omega)$, and $\beta \in C^{\infty}_{(0,n-q-2)}(b\Omega)$, we have
\begin{equation}\label{app2}
(\beta,
T_{q+1}\overline{\partial}_{b}\alpha)_{\mathcal{L}^{2}_{(0,n-q-2)}(b\Omega)}
=
\int_{b\Omega}\beta\wedge\overline{\partial}_{b}\alpha\wedge dz_{1}
\cdots\wedge dz_{n} \; .
\end{equation}
In the integral on the right hand side of \eqref{app2}, we can replace $\overline{\partial}_{b}\alpha$ by $d\alpha$ (the extra terms, when wedged with $dz_{1}\wedge \cdots \wedge dz_{n}$, vanish on $b\Omega$). Now integrate by parts to obtain
\begin{equation}\label{app3}
(-1)^{n-1-q}(\beta,
T_{q+1}\overline{\partial}_{b}\alpha)_{\mathcal{L}^{2}_{(0,n-2-q)}(b\Omega)} = (\overline{\partial}_{b}\beta,
T_{q}\alpha)_{\mathcal{L}^{2}_{(0,n-1-q)}(b\Omega)} \; .
\end{equation}
Because $C^{\infty}(b\Omega)$ is dense in $\text{dom}(\overline{\partial}_{b})$ in the graph norm (by Friedrichs' Lemma, see e.g. \cite{ChSh01}, Lemma D.1),
\eqref{app3} holds for $\alpha$ and $\beta$ in $\text{dom}(\overline{\partial}_{b})$ of the respective form levels.
It then follows that $T_{q}\alpha \in \text{dom}(\overline{\partial}_{b}^{*})$, and that
\begin{equation}\label{app4}
(-1)^{n-1-q}T_{q+1}\overline{\partial}_{b}\alpha =
\overline{\partial}_{b}^{*}T_{q}\alpha \; .
\end{equation}

We next compute $T_{q}$ in a special boundary chart. Let $\beta = \sum_{M \in \mathcal{I}_{n-1-q}}\beta_{M}\overline{\omega_{M}}$, $u =\sum_{J \in \mathcal{I}_{q}}u_{J}\overline{\omega_{J}}$. Then (from \eqref{app1})
\begin{equation}\label{app5}
(\beta,T_{q}u) = \sum_{\stackrel{M \in \mathcal{I}_{n-1-q}}{\stackrel{J \in \mathcal{I}_{q}}{\,}}}\int_{b\Omega}\beta_{M}\overline{\omega_{M}}\wedge u_{J}\overline{\omega_{J}}\wedge dz_{1}\wedge \cdots \wedge dz_{n} \; .
\end{equation}
Define the function $h$ by $\overline{\omega_{1}}\wedge \cdots \wedge\overline{\omega_{n-1}}\wedge dz_{1}\wedge \cdots \wedge dz_{n} = hd\sigma$ on $b\Omega$. Then \eqref{app5} gives
\begin{equation}\label{app6}
(\beta,T_{q}u) = \left (\beta, \overline{h}\!\!\!\! \sum_{\stackrel{M \in \mathcal{I}_{n-1-q}}{\stackrel{J \in \mathcal{I}_{q}}{\,}}}\!\!\!\!\varepsilon^{MJ}_{(1, \cdots, n-1)}\overline{u_{J}} \;\overline{\omega_{M}}\right ) \; ,
\end{equation}
i.e.
\begin{equation}\label{app7}
T_{q}u = \overline{h}\!\!\!\! \sum_{\stackrel{M \in \mathcal{I}_{n-1-q}}{\stackrel{J \in \mathcal{I}_{q}}{\,}}}\!\!\!\!\varepsilon^{MJ}_{(1, \cdots, n-1)}\overline{u_{J}}\; \overline{\omega_{M}} \; ,
\end{equation}
where $\varepsilon^{MJ}_{(1, \cdots, n-1)}$ are the usual generalized Kronecker symbols. Then 
\begin{multline}\label{app8}
T_{n-1-q}T_{q}\alpha \;\,=\;\, \overline{h}\!\!\!\!\sum_{\stackrel{M \in \mathcal{I}_{q}}{\stackrel{J \in \mathcal{I}_{n-1-q}}{\,}}}\!\!\!\!\varepsilon^{MJ}_{(1, \cdots, n-1)}\overline{(T_{q}\alpha)_{J}}\;\,\overline{\omega_{M}} \\
= \;\,|h|^{2}\!\!\!\!\sum_{\stackrel{M,K \in \mathcal{I}_{q}}{\stackrel{J \in \mathcal{I}_{n-1-q}}{\,}}}\!\!\!\!\varepsilon^{MJ}_{(1, \cdots, n-1)}\varepsilon^{JK}_{(1, \cdots, n-1)}\overline{\alpha_{K}}\;\,\overline{\omega_{M}}\\ 
=\;\, 
|h|^{2}\sum_{M \in \mathcal{I}_{q}}(-1)^{q(n-1-q)}\overline{\alpha_{M}}\;\,\overline{\omega_{M}} \;\, =\;\,
(-1)^{q(n-1-q)}|h|^{2}\alpha \; .
\end{multline}
Note that $|h|$ is a (nonzero) constant that is globally defined: $\overline{\omega_{1}}\wedge \cdots \wedge\overline{\omega_{n-1}}\wedge dz_{1}\wedge \cdots \wedge dz_{n} = a\,\overline{\omega_{1}}\wedge \cdots \wedge\overline{\omega_{n-1}}\wedge\omega_{1}\wedge \cdots \wedge\omega_{n} = a(const.)(*\omega_{n}) = a (const.) d\sigma$ (when pulled back to $b\Omega$), with $|a|=1$ and $*$ the usual Hodge $*$-operator, see for example Lemma 3.3 and Corollary 3.5 in chapter III of \cite{Ran86}. Thus \eqref{app8} provides $T_{q}^{-1}$ with $T_{q}^{-1} = (const.) \;T_{n-1-q}$. In particular, $T_{q}$ is an isomorphism between the respective spaces. Moreover, \eqref{app4} gives $\|\overline{\partial}_{b}\alpha\| \approx \|\overline{\partial}_{b}^{*}T_{q}\alpha\|$ and, when combined with the fact that $T_{n-1-q}T_{q}\alpha=(const.)\alpha$, also $\|\overline{\partial}_{b}^{*}\alpha\| \approx \|\overline{\partial}_{b}T_{q}\alpha\|$.

Now let $\alpha \in Im(\overline{\partial}_{b}) \subseteq \mathcal{L}^{2}_{(0,q)}(b\Omega)$. Then, by \eqref{app4}, $T_{q}\alpha \in Im(\overline{\partial}_{b}^{*})$, i.e. 
\begin{equation}
T_{q}\alpha = \overline{\partial}_{b}^{*}\overline{\partial}_{b}G_{n-1-q}T_{q}\alpha \; .
\end{equation}
Set $\gamma = (-1)^{n-1-q}T_{q-1}^{-1}\overline{\partial}_{b}G_{n-1-q}T_{q}\alpha$. Then (from \eqref{app4})
\begin{equation}
T_{q}\overline{\partial}_{b}\gamma = (-1)^{n-1-q}\overline{\partial}_{b}^{*}T_{q-1}\gamma = \overline{\partial}_{b}^{*}\overline{\partial}_{b}G_{n-1-q}T_{q}\alpha = T_{q}\alpha \; ,
\end{equation}
that is, $\overline{\partial}_{b}\gamma = \alpha$. So if the canonical solution operator to $\overline{\partial}_{b}^{*}$, $\overline{\partial}_{b}G_{n-1-q}$, is compact, we have produced a compact solution operator for $\overline{\partial}_{b}$ at the symmetric level. Consequently, the canonical solution operator at this level, $\overline{\partial}_{b}^{*}G_{q}$, is compact.

\vspace{0.4in}

\section*{References}

\renewcommand{\refname}{}    
\vspace*{-36pt}              

\providecommand{\bysame}{\leavevmode\hbox to3em{\hrulefill}\thinspace}

\frenchspacing

\end{document}